\documentclass[reqno,12pt]{amsart}
\usepackage{amsfonts}
\usepackage{a4wide}	
\usepackage{bm}

\numberwithin{equation}{section}

\usepackage{amsmath,amssymb,amsthm,amsfonts}
\usepackage{mathrsfs}
\usepackage{bbm}
\usepackage{hyperref}
\usepackage{xcolor}
\usepackage{extarrows}

\allowdisplaybreaks[4]%

\newtheorem{lemma}{Lemma}[section]
\newtheorem{theorem}{Theorem}[section]
\newtheorem{definition}{Definition}[section]
\newtheorem{proposition}{Proposition}[section]
\newtheorem{remark}{Remark}[section]

\newcommand{\R}{\mathbb{R}}

\renewcommand{\S}{\mathbb{S}}

\newcommand{\CT}{\mathcal{T}}

\newcommand{\ga}{\gamma}

\newcommand{\eps}{\epsilon}

\newcommand{\vps}{\varepsilon}

\newcommand{\norm}[1]{\left\Vert #1 \right\Vert}
\newcommand{\moc}[1]{\left\lvert #1 \right\rvert}

\date{}							

\everymath{\displaystyle}
\begin{document}
	\title[Low-regularity well-posedness for the Boltzmann equation near vacuum]{Low-regularity global well-posedness for the Boltzmann equation near vacuum}

\author[X.-F. Hu]{Xinfeng Hu}
\address[XFH]{School of Mathematics Science, Fudan University, Shanghai 200433, P.R. ~China}
\email{xfhu25@m.fudan.edu.cn}

\author[S.-Q. Liu]{Shuangqian Liu*}
\address[SQL]{School of Mathematics and Statistics, and Key Lab NAA-MOE, Central China Normal University, Wuhan 430079, P.R.~China}
\email{sqliu@ccnu.edu.cn}
\thanks{*Corresponding author}

\author[H.-R. Peng]{Haoran Peng}
\address[HRP]{School of Mathematics and Statistics, and Hubei Key Lab--Math. Sci., Central China Normal University, Wuhan 430079, P.R.~China}
\email{mathematicphr@mails.ccnu.edu.cn}

\author[Y. Zhou]{Yi Zhou}
\address[YZ]{School of Mathematics Science, Fudan University, Shanghai 200433, P.R. ~China}
\email{yizhou@fudan.edu.cn}

\date{\today}

\subjclass[2020]{Primary: 35Q20, 35B35, 35B36; Secondary: 76P05}	

\keywords{Boltzmann equation, vacuum, well-posedness, anisotropic critical Besov space, div-curl lemma.}

	\maketitle

	\begin{abstract}
        We study the Boltzmann equation near vacuum in anisotropic low-regularity Besov spaces. We establish the global existence and uniqueness of strong solutions with the critical regularity index $2/p$ for $p\in[1,\infty)$ in $\R^3$. The proof relies on a new bilinear estimate for the nonlinear collision operator. Combined with a div-curl type lemma we develop, this allows us to close the {\it a priori} estimates and thereby obtain global well-posedness.

	\end{abstract}

	
\tableofcontents	
    \section{Introduction}

\subsection{The problem}
The three-dimensional Boltzmann equation, which describes the time evolution of the velocity distribution function $\tilde{f}= \tilde{f}(t,\tilde{x},\tilde{v})\geq 0$ of particles at position $\tilde{x} = (\tilde{x}_1, \tilde{x}_2,\tilde{x}_3)\in \mathbb{R}^3$, velocity $\tilde{v} = (\tilde{v}_1,\tilde{v}_2,\tilde{v}_3)\in \mathbb{R}^3$, and time $t\geq 0,$ reads
    \begin{equation}\label{Bolzmann Equation}
          \left\{
        \begin{array}{l}
          \partial_t \tilde{f}(t,\tilde{x},\tilde{v}) + \tilde{v}\cdot \nabla_{\tilde{x}} \tilde{f}(t,\tilde{x},\tilde{v})=\tilde{Q}(\tilde{f},\tilde{f}),\\[2ex]
            \tilde{f}(0,\tilde{x},\tilde{v})=\tilde{f}_0(\tilde{x},\tilde{v}),
        \end{array}
        \right.
        \end{equation}
        where the initial data $\tilde{f}(0,\tilde{x},\tilde{v}) = \tilde{f}_0(\tilde{x},\tilde{v})$ is given. The bilinear Boltzmann collision operator $\tilde{Q}(\cdot,\cdot)$ is defined as
        \begin{align}\label{con-op}
         \tilde{Q}( \tilde{f},\tilde{g} )=&\int_{\mathbb{R}^3\times \mathbb{S}^2} \moc{\tilde{u}-\tilde{v}}^{\gamma}\mathbf{b}(\cos \tilde{\theta})\tilde{f}(\tilde{v}')\tilde{g}(\tilde{u}')d \tilde{u} d\tilde\omega-\int_{\mathbb{R}^3\times \mathbb{S}^2} \moc{\tilde{u}-\tilde{v}}^{\gamma}\mathbf{b}(\cos \tilde{\theta})\tilde{f}(\tilde{v})\tilde{g}(\tilde{u})d \tilde{u} d\tilde\omega.
        \end{align}
        Here, the pre- and post-collision velocities are related by
        $$ \tilde{u}' = \tilde{u} + \tilde\omega\cdot(\tilde{v}-\tilde{u})\tilde\omega,\  \tilde{v}'=\tilde{v} - \tilde\omega\cdot(\tilde{v}-\tilde{u})\tilde\omega. $$
        and the deviation angle $\tilde{\theta}$ satisfies $\cos \tilde{\theta} :=\dfrac{\tilde{v}-\tilde{u}}{\moc{\tilde{v}-\tilde{u}}}\cdot \tilde\omega,$ under the Grad's angular cutoff assumption $$ 0\leq \mathbf{b}(\cos \tilde{\theta})\leq C\moc{\cos \tilde{\theta}}.$$ The collision frequency index $\ga\in(-3,1]$, characterizes the potential type, with $\ga\in[0,1]$ corresponding to hard potentials and $\ga\in(-3,0)$ corresponding to soft potentials.

To apply the div-curl lemma \ref{div-curl Lemma} and to balance the relative velocity in the collision kernel of \eqref{con-op}, we perform a change of variables to reformulate \eqref{Bolzmann Equation}. For fixed $\tilde{v},\tilde{u}$, denote $e = \dfrac{\tilde{v}-\tilde{u}}{\moc{\tilde{v}-\tilde{u}}}$, and let $A$ be an orthogonal matrix such that
$x=(\eta,y_1,y_2) = A\tilde{x}$ with $\eta=\tilde{x}\cdot e$. With this choice of $A$, we introduce the new variables
$$x = A\tilde{x} = (\tilde{x}\cdot e, y):=(\eta,y)=(\eta,y_1,y_2), v = A\tilde{v}=(v_1,v_y), $$
where $\eta\in\mathbb{R}$, $y$ and $v_y\in\mathbb{R}^2$. Note that both $\eta$ and $v_y$ are independent of $y$, and $ \moc{\tilde{v}-\tilde{u}}=\moc{v-u} =\moc{v_1-u_1}$.

Define
$$ f(t,x,v) = \tilde{f}(t,A^{-1}x,A^{-1}v).$$
Then \eqref{Bolzmann Equation} is transformed into
        \begin{align}\label{Change variable BE}
        \left\{\begin{array}{rll}
          &\partial_t f + v_1\partial_{\eta}f + v_y\partial_{y}f = Q(f,f),\\[2mm]
          &f_0(x,v)=\tilde{f}_0( A^{-1}x,A^{-1}v),
          \end{array}\right.
        \end{align}
where we have also performed the change of variable $ \tilde \omega = A^{-1}\omega.$ The collision Boltzmann operator $Q(\cdot,\cdot)$ is given by
        \begin{align*}
          Q(f,f)&=\int_{\mathbb{R}^3}\int_{\S^2}\moc{\tilde{u}-\tilde{v}}^{\gamma}\mathbf{b} \left(\dfrac{A^{-1} v - A^{-1}u}{\moc{A^{-1} v - A^{-1} u }}\cdot A^{-1}\omega\right)\\
          &\quad\times\left( \tilde{f} (t, A^{-1}x, A^{-1}v')\tilde{f}(t,A^{-1}x,A^{-1}u') -\tilde{f}(t,A^{-1}x,A^{-1}v)\tilde{f}(t,A^{-1}x,A^{-1}u)\right) du d\omega\\
                   &=\int_{\mathbb{R}^3}\int_{\S^2}\moc{v-u}^{\gamma}\mathbf{b}(\cos\theta)
                   \left( f(t, x, v')f(t,x,u')-f(t,x,v)f(t,x,u)\right) du d\omega
                   \notag\\
         =&Q_{\textrm{\textrm{gain}}}(f,f) - Q_{\textrm{\textrm{loss}}}(f,f)
        \end{align*}
        with
        $$u'=A\tilde{u}' = u+\omega\cdot(v-u)\omega ,v' = A\tilde{v}' =v-\omega\cdot(v-u)\omega,$$ and $\theta=\dfrac{v-u}{\moc{v-u}}\cdot \omega.$

Our goal is to investigate the well-posedness of the Cauchy problem \eqref{Change variable BE} near vacuum within anisotropic low regularity Besov spaces. The vacuum regime presents a fundamental challenge in the analysis of the Boltzmann equation, as the collision operator becomes highly degenerate when the density vanishes, leading to a severe loss of coercivity and regularizing effects. By working in anisotropic critical Besov spaces adapted to the scaling of the equation, we capture the minimal regularity framework compatible with the vacuum state. Moreover, the anisotropic structure introduced through the change of variables reveals a directional decomposition aligned with the relative velocity, which is essential for compensating the degeneracy near vacuum. This allows us to identify and exploit a hidden structure in the equation, combining transport effects with the geometry of collisions.

       \subsection{Main results}
       Before stating our main results, we introduce several basic norms.
 For $l\in\R$, we define the time dependent weight
 \begin{align}\label{ww}
w(t,x,v)=\langle x - (t+1)v\rangle^{l}.
\end{align}
We then introduce the energy-type anisotropic Chemin-Lerner space
        \begin{equation}\label{Energy1}
          \mathscr{E}_T(f) = \norm{f(t,x,v)}_{\tilde{L}_T^{\infty}\tilde{L}_{v,\eta}^p(B^{2/p}_{p,1,w})},\ x=(\eta,y),
        \end{equation}
        for $t\in[0,T]$, where the norm $\tilde{L}_T^{\infty}\tilde{L}_{v,\eta}^p(B^{2/p}_{p,1,w})$ will be defined in Section \ref{sec-nfs}.
        Correspondingly, we define the initial energy by
        \begin{equation}\label{Energy1-T=0}
          \mathscr{E}_0(f) = \norm{f(0,x,v)}_{\tilde{L}_{v,\eta}^p(B^{2/p}_{p,1,w_0})}=\norm{f_0(x,v)}_{\tilde{L}_{v,\eta}^p(B^{2/p}_{p,1,w_0})},
        \end{equation}
where $w_0(x,v) = w(0,x,v) = \langle x-v\rangle^{l}.$

Next, we introduce the bilinear dissipation functional
\begin{equation}\label{D_T(f,g)}
          \mathscr{D}_T(f,g) = \sum_{j,k\geq -1} 2^{\frac{2(j+k)}{p}} \norm{\moc{v-u}^{\frac{1}{p}}\norm{w(t,x,v)\Delta_jf(t,x,v)}_{L_y^p}
          \norm{w(t,x,u)\Delta_kg(t,x,u)}_{L_y^p} }_{L^p_{t,v,u,\eta}},
        \end{equation}
 for $t\in[0,T]$, and in particular, we denote
        \begin{align}\label{Energy2}
          \mathscr{D}_T(f) =&\mathscr{D}_T(f,f)\notag\\=& \sum_{j,k\geq -1} 2^{\frac{2(j+k)}{p}} \norm{\moc{v-u}^{\frac{1}{p}}\norm{w(t,x,v)\Delta_jf(t,x,v)}_{L_y^p}
          \norm{w(t,x,u)\Delta_kf(t,x,u)}_{L_y^p} }_{L^p_{t,v,u,\eta}}.
        \end{align}

We are now ready to state our main result.
        \begin{theorem}\label{Well-posedeness}
          Suppose that $p> 1$ and $\frac{3}{p}-2 < \gamma <\frac{2}{p}-1$, with the endpoint case $p=1$ corresponding to $\gamma=1$. Let $l>\max\left\{ 1-\dfrac{1}{p},3-\dfrac{4}{p}+\gamma\right\}.$
          Assume there exists $\vps>0$ such that
          $$ \mathscr{E}_0(f)=\sum_{j\geq -1} 2^{\frac{2j}{p}}\norm{w_0(x,v)\Delta_j f_0(x,v)}_{L_{v,x}^{p}}\leq\varepsilon, $$
        where $w_0(x,v)=\langle x-v\rangle^{l}$.

        Then  there exists a unique global solution $f(t,x,v)$ to \eqref{Change variable BE} satisfying
        \begin{align}\label{eng-es}
   \mathscr{E}_T(f)+\mathscr{D}^{\frac{1}{2}}_T(f)\lesssim \mathscr{E}_0(f), ~~\forall T\geq0.
        \end{align}
        \end{theorem}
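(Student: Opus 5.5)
The proof will be a bootstrap built on an \emph{a priori} estimate of the form
\begin{equation*}
\mathscr{E}_T(f)+\mathscr{D}_T^{\frac12}(f)\le C\,\mathscr{E}_0(f)+C\,\mathscr{E}_T(f)\,\mathscr{D}^{\frac12}_T(f)+C\,\mathscr{E}_T(f)^2 ,
\end{equation*}
or a variant in which the quadratic terms are controlled by $\mathscr{E}_T\mathscr{D}_T^{1/2}$. From such a bound, smallness of $\varepsilon$ closes the estimate by continuity. Local existence will come from iterating the scheme
\begin{equation*}
\partial_t f^{n+1}+v\cdot\nabla_x f^{n+1}=Q_{\rm gain}(f^n,f^n)-Q_{\rm loss}(f^{n+1},f^n),
\end{equation*}
which keeps $f^{n+1}\ge0$. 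I will show that the sequence is bounded in the $\mathscr{E}_T,\mathscr{D}_T$ norms and that it contracts in a slightly weaker norm. The same difference estimate gives uniqueness.

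First, the linear transport part. Since $w(t,x,v)=\langle x-(t+1)v\rangle^l$ is transported by $\partial_t+v\cdot\nabla_x$, I apply $\Delta_j$ in $x$, multiply by $w$, and take $L^p_{v,\eta}$ after $L^p_y$. Integrating along characteristics gives a bound on $2^{2j/p}\sup_t\|w\Delta_jf\|$ by the initial datum plus $\int_0^T\|w\Delta_j Q(f,f)\|\,dt$, summed in $j$ (the Chemin--Lerner structure). The dissipation $\mathscr{D}_T$ does not come from this step. It should come from a div--curl type lemma (Lemma \ref{div-curl Lemma}) applied to two copies of the transport equation, one in velocity $v$ and one in velocity $u$, in the rotated frame where $v-u$ is parallel to the $\eta$-axis. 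Concretely, I will take $F(t,\eta,v)=\|w\Delta_jf(t,\cdot,v)\|_{L^p_y}^p$ and $G(t,\eta,u)=\|w\Delta_kf(t,\cdot,u)\|^p_{L^p_y}$. These satisfy one-dimensional transport inequalities in $\eta$ with speeds $v_1$ and $u_1$, so the interaction functional $\int\!\!\int_{\eta<\eta'}F(\eta)G(\eta')$ has time derivative $-|v_1-u_1|\int FG$ plus source terms. Integrating in time gives
\[
\int_0^T\!\!\int |v-u|\,F G\,d\eta\,dt\lesssim \|F\|_{L^\infty_tL^1_\eta}\|G\|_{L^\infty_tL^1_\eta}+\text{(source terms)}.
\]
Taking $p$-th roots and summing in $j,k$ with weights $2^{2(j+k)/p}$ should bound $\mathscr{D}_T(f)\lesssim \mathscr{E}_T(f)^2+(\text{collision contributions})$. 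This is why the $L^p_y$ norm sits inside and $\eta$ is singled out.

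Second, and this is the heart of the argument, the bilinear estimate for $Q$. I need to bound
\[
\sum_j 2^{2j/p}\|w\Delta_jQ(f,f)\|_{L^1_TL^p_{v,\eta}L^p_y}\lesssim \mathscr{E}_T(f)\,\mathscr{D}_T^{1/2}(f)\ \text{ or }\ \mathscr{D}_T(f).
\]
For the loss term I use Bony's paraproduct decomposition in $y$ and the embedding $B^{2/p}_{p,1}(\R^2_y)\hookrightarrow L^\infty_y$, which is the source of the critical index $2/p$. Hölder in $u$ then moves $|v-u|^{\gamma}$ onto $|v-u|^{1/p}$. The remainder $|v-u|^{\gamma-1/p}$ (and its counterpart in $p'$) must be integrable against the weights $w^{-1}$. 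Since $w(t,x,v)^{-1}w(t,x,u)^{-1}$ decays like $\langle (t+1)(v-u)\rangle^{-l}$ by the triangle inequality, this gives time integrability. The conditions $\frac3p-2<\gamma<\frac2p-1$ and $l>\max\{1-\frac1p,3-\frac4p+\gamma\}$ should be exactly what makes these singular integrals converge, locally near $u=v$ and at infinity. For the gain term I use the pre/post-collisional change of variables $(v,u)\mapsto(v',u')$ with unit Jacobian, together with $|v'-u'|=|v-u|$ and $w(x,v)\lesssim w(x,v')w(x,u')$ (from $v+u=v'+u'$, the triangle inequality, and Peetre's inequality). This reduces the gain term to the loss-type bound, at the cost of handling the $\omega$-integral with $\mathbf b\lesssim|\cos\theta|$.

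The main obstacle is this bilinear estimate: matching the exponents so that exactly one factor is absorbed by $\mathscr{D}_T^{1/2}$, with its weight $|v-u|^{1/p}$, while the other stays in $\mathscr{E}_T$, uniformly in $T$. I would first try to prove it at the level of single dyadic blocks $\Delta_j,\Delta_k$ and then sum.

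Once both steps hold, the bootstrap with $\varepsilon$ small yields \eqref{eng-es} for all $T$. Running the same argument on the difference of two solutions, or of consecutive iterates, gives uniqueness and convergence of the scheme.
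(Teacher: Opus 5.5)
Your overall architecture is the paper's: the transport bound for $\mathscr{E}_T$, the div--curl interaction functional for $\mathscr{D}_T$, Bony's decomposition with $B^{2/p}_{p,1}\hookrightarrow L^\infty_y$ and the summation lemma, the same iteration scheme, and the difference estimate for uniqueness. The gap is in the step you call the heart of the argument: for $p>1$, the gain term does \emph{not} reduce to the loss-type bound.

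In the loss term, after you move $w(v)w(u)$ onto $f(v)g(u)$, what remains is $|v-u|^{\gamma-\frac1p}w^{-1}(u)$. Only $w^{-1}(u)$ is left, since $w(v)$ is consumed by $f(v)$, so your bound via $w^{-1}(v)w^{-1}(u)\lesssim\langle (t+1)(v-u)\rangle^{-l}$ is not available. The decay of $\langle x-(t+1)u\rangle^{-l}$ together with $u\mapsto u/(t+1)$ gives $\|\,|v-u|^{\gamma-\frac1p}w^{-1}(u)\|_{L^{p'}_u}\lesssim (t+1)^{(a-3)/p'}$, with $a=(\frac1p-\gamma)p'$.

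In the gain term, you use $w(v)\le w(v')w(u')$. This is true, but it holds because $|x-cv|^2+|x-cu|^2$ with $c=t+1$ is a collision invariant; $v+u=v'+u'$ and Peetre alone give only $w(v)\lesssim w(v')w(u')w(u)$. Using it leaves no decaying factor at all, and $|v-u|^{\gamma-\frac1p}\notin L^{p'}_u(\mathbb{R}^3)$, which fails at infinity since $a<3$. You must keep the ratio and bound $\|\,|v-u|^{\gamma-\frac1p}w(v)w^{-1}(v')w^{-1}(u')\|_{L^{p'}}$. For fixed $\omega$, duality and the measure-preserving involution $(v,u)\leftrightarrow(v',u')$ show this is exactly what any H\"older-type bound requires. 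Set $s=x-(t+1)v$, $b=lp'$, $P_\omega u=(\omega\cdot u)\omega$ and $P^\perp_\omega u=u-P_\omega u$. The quantity becomes $(t+1)^{a-3}J$ with $J=\int_{\mathbb{R}^3}\langle s\rangle^{b}|u|^{-a}\langle s-P_\omega u\rangle^{-b}\langle s-P^\perp_\omega u\rangle^{-b}\,du$. This is a genuinely different integral, and the paper devotes Lemma~\ref{lem-gain} to it. Your shortcut is exactly what the paper does only at the endpoint $p=1$, $\gamma=1$, where $p'=\infty$ and $|v-u|^{\gamma-1/p}\equiv 1$.

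When you fill this in, note that the paper bounds the whole collision term by $\mathscr{D}_T(f,g)$, not by $\mathscr{E}_T\mathscr{D}_T^{1/2}$. It does so by H\"older in $t$ against $(t+1)^{(a-3)/p'}$, which must lie in $L^{p'}_t(0,\infty)$, i.e.\ $a<2$. That, not integrability near $u=v$, is the source of $\gamma>\frac3p-2$. The other conditions translate as: $\gamma<\frac2p-1$ is $a>1$, and the two conditions on $l$ are $b>1$ and $a+b>3$.

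In this range $J$ is \emph{not} bounded uniformly in $s$ for fixed $\omega$. Take $s=R\omega$ and the slab $|u\cdot\omega-R|\le 1$, $|P^\perp_\omega u|\le R$. There the integrand is $\gtrsim R^{-a}$ on a set of volume $\sim R^2$, so $J\gtrsim R^{2-a}\to\infty$. Lemma~\ref{lem-gain} as stated asserts the fixed-$\omega$ bound. Its Step 1 applies Lemma~\ref{Integral Estimate Lemma} with $l_2=b>n=1$, where the stated decay $\langle s\rangle^{-(l_1+l_2-n)}$ is too strong; the true rate is $\langle s\rangle^{-l_1}$.

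What does work is H\"older jointly in $(u,\omega)$. For each $\omega$ the involution preserves measure, so $\int_{\mathbb{R}^3}\int_{\mathbb{S}^2}\int_{\mathbb{R}^3}|\Phi(v',u')|^p\,du\,d\omega\,dv=4\pi\|\Phi\|^p_{L^p_{v,u}}$. Meanwhile the bad directions $|P^\perp_\omega s|\lesssim 1$ form a set of measure $O(R^{-2})$, which offsets the growth. One checks $\sup_s\int_{\mathbb{S}^2}J\,d\omega<\infty$ under $1<a<2$, $b>1$, $a+b>3$.

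Finally, the frame with $\eta\parallel v-u$ changes with the pair $(v,u)$. So the $y$-blocks $\Delta_j$ in your $F$ and $G$ differ from pair to pair, and differ from the fixed-frame blocks used in $\mathscr{E}_T$ and in the $Q$-estimate. Summing the div--curl bounds into $\mathscr{E}_T(f)^2$ therefore needs a frame-uniform version of the norm. Neither your outline nor Proposition~\ref{Estimation of D_T(f,g) prop} supplies one.
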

Two remarks concerning Theorem \ref{Well-posedeness} are given below.
\begin{remark}
In Theorem \ref{Well-posedeness}, since $p \in [1,+\infty)$, we have $\gamma \in (-2,1]$, which covers both moderately soft and hard potentials. The case of very soft potentials, corresponding to $\gamma \in (-3,-2]$, remains an open problem and requires further investigation.
\end{remark}
\begin{remark}
It is well known that the classical functional framework for establishing the well-posedness of the non-cutoff Boltzmann equation near vacuum is the weighted $L^\infty$ space; see, for instance, \cite{is-84,guo-vpb-01}. In Theorem \ref{Well-posedeness}, we construct a unique global strong solution to the Boltzmann equation near vacuum in the anisotropic Besov space $\tilde{L}_T^{\infty}\tilde{L}_{v,\eta}^p(B^{2/p}_{p,1,w})$ for $p\in[1,+\infty)$. Note that the embedding $\tilde{L}^p_\eta(B^{2/p}_{p,1}) \hookrightarrow L^\infty_{\eta,y}(\mathbb{R}^3)$ fails. In this sense, the solution obtained here can be regarded as a low-regularity global solution.
Moreover, we refer to $B_{p,1,w}^{2/p}$ as a critical Besov space, since the embedding $B_{p,1,w}^{2/p}\hookrightarrow L_{y}^{\infty,w}$ is critical.

\end{remark}

      \subsection{Related works}
Over the past several decades, the global well-posedness of the Cauchy problem for the Boltzmann equation has been a central topic in kinetic theory. Depending on the nature of the initial data, the existing theory can be broadly divided into two fundamentally different regimes.
The first regime concerns perturbations around Maxwellian equilibrium, where the initial data are assumed to be close to, or controlled by, a Maxwellian distribution. This setting has been extensively studied and is by now relatively well understood, with a variety of approaches including the spectral method \cite{NI,Sh,U74,U-s,UY-AA}, the energy method \cite{Guo-IUMJ,Guo-L,Liu-Yang-Yu,LY-S} and the comparison principle \cite{gpv-09}, etc.
In contrast, the second regime deals with non-Maxwellian solutions, where the initial data may be large, rough, or far from equilibrium. Typical examples include data without regularity \cite{DL}, perturbations around non-Maxwellian profiles such as polynomial states \cite{AEP-87,Car-1933,guo-liu-17}, and, in particular, the near-vacuum regime, a setting that has attracted increasing attention and will be the focus of the present paper.

The study of the Boltzmann equation near vacuum dates back to the classical work of Illner and Shinbrot \cite{is-84}, where weighted
$L^\infty$
 solutions were constructed for the hard-sphere model via a contraction mapping argument. This line of research was subsequently considered for related kinetic systems. In particular, Guo \cite{guo-vpb-01} established the well-posedness of the Vlasov-Poisson-Boltzmann (VPB) system near vacuum using the method of characteristics for moderately soft potentials $\ga\in(-2,0]$, and this result was later generalized to hard potentials by Duan-Yang-Zhu \cite{dyz-05,dyz-06}.
 Around the same time, Chae-Ha-Hwang and Duan-Zhang-Zhang \cite{chh-jde,dzz-06} proved $L^1$ stability of the VPB system near vacuum via a Lyapunov functional approach, which was later extended to
$L^p$
 stability by Ha-Yamazaki-Yun \cite{hyy-08}. Parallel to these developments, the small-data global theory for the Boltzmann equation itself has seen significant progress in various functional settings. Ars\'{e}nio \cite{ad-11} established global existence of mild solutions for small initial data in $L^D$ spaces. Later, Chen-Denlinger-Pavlovi\'{c} \cite{cdp-21} developed a bilinear spacetime estimate framework to obtain small-data global well-posedness. For cutoff kernels with soft potentials, He-Jiang \cite{hj-17,hj-23} studied well-posedness and scattering, as well as the Cauchy problem for small initial data, while further $L^p$ estimates on the gain term of the collision operator and their applications were obtained in \cite{hjkl-24}. For further related investigations, we refer the reader to the references therein in the above works.

More recently, extensions to other kinetic models have been studied, including the relativistic Boltzmann equation \cite{st-10,tc-18}, the quantum Boltzmann equation \cite{ow-22}, and the Vlasov-Yukawa-Boltzmann system \cite{ch-15,hxz-25}.
We also note that Luk \cite{jj-19} studied the well-posedness of the Landau equation near vacuum using the vector field method, and this result was later extended to the non-cutoff Boltzmann equation by Chaturvedi \cite{cs-21}.

Despite these advances, the well-posedness theory for the Boltzmann equation near vacuum remains far from complete, especially in critical function spaces. Very recently, Chen-Shen-Zhang \cite{csz-23,csz-24} studied both ill-posedness and well-posedness in critical Sobolev spaces via a dispersive approach. We also note that there has been some progress on the well-posedness of the Cauchy problem for the Boltzmann equation near Maxwellian equilibrium in low-regularity spaces; see, for instance, \cite{dlss-21,dlx-16,ds-18,ms-16,SS,ISV-26}.

In this paper, we develop a new approach to the Boltzmann equation near vacuum in low-regularity Besov spaces. More precisely, we establish the global well-posedness of strong solutions near vacuum in the critical anisotropic Besov space $\tilde{L}_T^{\infty}\tilde{L}_{v,\eta}^p(B^{2/p}_{p,1,w})$.


\subsection{Strategies and ideas}
This paper establishes the global well-posedness of the Boltzmann equation near vacuum in anisotropic low-regularity Besov spaces. The main contributions and methodologies can be summarized as follows.

\medskip
\noindent$\bullet$ \textit{Weighted anisotropic critical Besov spaces.}

To capture the energy-type norms of solutions to the Cauchy problem, we introduce weighted anisotropic critical Besov spaces
$\tilde{L}_T^{\infty}\tilde{L}_{v,\eta}^p(B^{2/p}_{p,1,w})$ $(p\in[1,+\infty))$, where the Besov space $B^{2/p}_{p,1,w}$ is critical in the sense that
$B_{p,1,w}^{2/p}\hookrightarrow  L_{y}^{\infty,w}$.
Moreover, the weight $w(t,x,v)=\langle x-(t+1)v\rangle^{l}$ with $l>\max\left\{ 1-\dfrac{1}{p},3-\dfrac{4}{p}+\gamma\right\}$
is carefully chosen to capture the transport effects and ensure integrability within the anisotropic framework. By working in these weighted critical spaces, we are able to simultaneously capture the low-regularity structure of the solution near vacuum and the directional propagation induced by free transport, which is essential for controlling nonlinear interactions in the absence of strong coercivity from the collision operator.

\noindent$\bullet$ \textit{Div-curl lemma and dissipation norm.}

By applying an orthogonal transformation, we convert the original Boltzmann equation into a form exhibiting a divergence-curl structure:
\begin{equation}
\partial_t f + v_1 \partial_\eta f + v_y \cdot \nabla_y f = Q(f,f).\notag
\end{equation}
Using this structure together with the div-curl lemma (Lemma \ref{div-curl Lemma}), we derive bilinear dissipation estimates of the form
\begin{align*}
       \int_0^T&\int_{\R^3} \int_{-\infty}^{+\infty}|v-u|\left(\int_{\mathbb{R}^2}\moc{ w(v)\Delta_j f(v)}^pdy\right)\left(\int_{\mathbb{R}^2}\moc{ w(u)\Delta_k g(u)}^{p}dy\right) d\eta dudvdt \\
               \lesssim&  \norm{w_0(v)\Delta_j f_0(v)}^p_{L_{v,x}^p}\norm{w_0(u)\Delta_k g_0(u)}^p_{L_{u,x}^p} + \norm{w(v)\Delta_j f(T,v)}^p_{L_{v,x}^p}\norm{w(u)\Delta_k g(T,u)}^p_{L_{u,x}^p}+h.o.t.
        \end{align*}
Accordingly, the bilinear dissipation norm is defined as
\begin{equation}
          \mathscr{D}_T(f,g) = \sum_{j,k\geq -1} 2^{\frac{2(j+k)}{p}} \norm{\moc{v-u}^{\frac{1}{p}}\norm{w(t,x,v)\Delta_jf(t,x,v)}_{L_y^p}
          \norm{w(t,x,u)\Delta_kg(t,x,u)}_{L_y^p} }_{L^p_{t,v.u,\eta}}.\notag
        \end{equation}
This functional not only captures the anisotropic interactions between dyadic components but also encodes the effect of relative velocities in the collision operator, which is crucial near vacuum where the standard coercivity estimates fail. As a result, the final energy estimates take the form
\begin{align*}
\mathscr{E}_T^2(f) + \mathscr{D}_T(f)
\lesssim \mathscr{E}_0^2(f)
+ \mathscr{E}_T^{2-\frac{2}{p}}(f)\mathscr{D}_T^{\frac{2}{p}}(f)
+ \mathscr{E}_T^{2-\frac{1}{p}}(f)\mathscr{D}_T^{\frac{1}{p}}(f).
\end{align*}

\noindent$\bullet$ \textit{Bilinear dissipation control of the nonlinear collision operator.}

We establish nonlinear estimates for the Boltzmann collision operator in a Besov-type framework by introducing a bilinear dissipation mechanism that directly controls the nonlinear term. Using Littlewood-Paley's decomposition and Bony's paraproduct, each dyadic component of
$Q(f,g)$ is carefully estimated, allowing the bilinear interactions to be summed and absorbed into the dissipation functional
$\mathscr{D}_T(f,g)$. This yields a unified bound valid for all
$p\in[1,+\infty)$.

\subsection{Structure of the paper}
The rest of the paper is organized as follows: Section \ref{sec-nfs} introduces the functional framework, including weighted anisotropic critical Besov spaces and Littlewood-Paley decomposition. In Section \ref{sec-nones}, we derive key nonlinear estimates, in particular a bilinear bound for the collision operator via the bilinear dissipation functional. Section \ref{ape-sec} employs a div-curl lemma to establish the {\it a priori} energy estimates. Section \ref{sec-glo} combines these results to prove the global existence and uniqueness of strong solutions for small initial data. Finally, Section \ref{sec-app} collects technical lemmas, including the div-curl lemma and weight estimates.

        \section{Notations and Function Spaces}\label{sec-nfs}
        Throughout the paper, $C$ denote some generic positive (generally large) constant and $i,j,k$ denote the integer, $p', p\geq1$ and satisfy $ \dfrac{1}{p}+\dfrac{1}{p'} = 1.$
        For two quantities $A$ and $B$, $A\lesssim B$ means that there is a generic constant $C>0$ such that $A\leq CB$. We denote $\langle x \rangle:=\sqrt{1+\moc{x}^2}$ for any $x\in \mathbb{R}^3$.

       Since the key nonlinear estimates rely on a dyadic decomposition in the Fourier variable, we briefly recall the Littlewood-Paley theory and the associated function spaces, including Besov and Chemin-Lerner spaces. For more details, we refer the reader to \cite{BCD}.

We begin with the Fourier transform. Throughout this paper, the Fourier transform is taken only with respect to the variable $y$, while $\eta$, $v$, and $t$ are treated as parameters. For fixed $(t,\eta,v)$, the Fourier transform of a Schwartz function $f(t,\eta,y,v)\in\mathcal{S}(\mathbb{R}^2_y)$ is defined by
        $$ \hat{f}(t,\eta,z,v):=\int_{\mathbb{R}^2} e^{-iy\cdot z}f(t,\eta,y,v)dy.$$

        Next, we introduce a dyadic partition of unity in $\mathbb{R}^2$. Let $(\varphi,\chi)$ be smooth functions taking values in $[0,1]$ such that $\varphi$ is supported in the annulus $\mathbb{C}\left(0,\dfrac{3}{4},\dfrac{8}{3}\right)=\left\{ z\in \mathbb{R}^2: \dfrac{3}{4}\leq \moc{z}\leq \dfrac{8}{3}\right\}$ and $\chi$ is supported in the ball $\mathbb{B}\left( 0,\dfrac{4}{3} \right) = \left\{ z\in\mathbb{R}^2:\moc{z}\leq \dfrac{4}{3} \right\},$ with
        $$ \chi(z)+\sum_{j\geq 0} \varphi(2^{-j}z)=1, \quad\quad\forall z\in \mathbb{R}^2. $$
        $$ \sum_{j\in \mathbb{Z}}\varphi(2^{-j}z) =1 \quad\quad \forall z\in\mathbb{R}^{2}- \{0\}.$$ The nonhomogeneous dyadic blocks of $f$ are defined as follows:
        $$\Delta_{-1}f := \chi(D)f = \tilde{\psi} * f = \int_{\mathbb{R}^2} \tilde{\psi}(y) f(z-y) \, dy, \quad \text{with } \tilde{\psi} = \mathcal{F}^{-1}\chi;$$
        $$\Delta_j f := \varphi(2^{-j}D)f = 2^{2j} \int_{\mathbb{R}^2} \psi(2^j y) f(z-y) \, dy,\quad \text{with } \psi = \mathcal{F}^{-1}\varphi, \quad j \geq 0,$$
        where $*$ is the convolution operator with respect to the variable $z$ and $\mathcal{F}^{-1}$ denotes
        the inverse Fourier transform. Define the low frequency cut-off operator $S_j$ ($j \geq -1$) by
        $$S_j f := \sum_{k \leq j-1} \Delta_k f,$$
        with the convention that $S_0 f = \Delta_{-1} f$ and $S_{-1} f = 0$. One also sees that
        $$ S_{j}f = \chi({2^{-j}D}):= \int_{\mathbb{R}^2} \tilde{\psi}(2^j z) f(y - z)dz,\quad\quad \forall j \geq -1. $$

        With these notions, the nonhomogeneous Littlewood-Paley decomposition of  $f$ is given by
        $$f = \sum_{j \geq -1} \Delta_j f.$$ Having defined the linear operators $\Delta_j$ for $j\geq -1,$ we give the definition of nonhomogeneous anisotropic Besov space as follows.

      \begin{definition}
Let $1 \le p,q,r \le \infty$ and $s \in \mathbb{R}$.

The nonhomogeneous Besov space is defined by
\[
B_{p,r}^s := \Big\{ g \in \mathcal{S}'(\mathbb{R}^2_y) :
g = \sum_{j \ge -1} \Delta_j g,\ \|g\|_{B_{p,r}^s} < \infty \Big\},
\]
where $\mathcal{S}'(\mathbb{R}^2_y)$ denotes the space of Schwartz distributions on $\mathbb{R}^2_y$.

The nonhomogeneous anisotropic Besov space is defined by
\[
\tilde{L}_{v,\eta}^p(B_{p,r}^s)
:= \Big\{ f \in \mathcal{S}'(\mathbb{R}^3_v \times \mathbb{R}_\eta \times \mathbb{R}^2_y) :
f = \sum_{j \ge -1} \Delta_j f,\
\|f\|_{\tilde{L}_{v,\eta}^p(B_{p,r}^s)} < \infty \Big\}.
\]

We also define the time-dependent Chemin--Lerner type space
\[
\tilde{L}_T^q \tilde{L}_{v,\eta}^p(B_{p,r}^s)
:= \Big\{ f \in \mathcal{S}'(\mathbb{R}_t \times \mathbb{R}^3_v \times \mathbb{R}_\eta \times \mathbb{R}^2_y) :
f = \sum_{j \ge -1} \Delta_j f,\
\|f\|_{\tilde{L}_T^q \tilde{L}_{v,\eta}^p(B_{p,r}^s)} < \infty \Big\}.
\]

The corresponding norms are given by
\[
\|g\|_{B_{p,r}^s}
= \left( \sum_{j \ge -1} \big( 2^{js} \|\Delta_j g\|_{L_y^p} \big)^r \right)^{\frac{1}{r}},
\]
\[
\|f\|_{\tilde{L}_{v,\eta}^p(B_{p,r}^s)}
= \left( \sum_{j \ge -1} \big( 2^{js} \|\Delta_j f\|_{L_{v,\eta,y}^p} \big)^r \right)^{\frac{1}{r}},
\]
and
\[
\|f\|_{\tilde{L}_T^q \tilde{L}_{v,\eta}^p(B_{p,r}^s)}
= \left( \sum_{j \ge -1} \big( 2^{js} \|\Delta_j f\|_{L_t^q L_{v,\eta,y}^p} \big)^r \right)^{\frac{1}{r}}, \ t\in[0,T],
\]
with the usual modification when $ p, q, r=\infty.$

\medskip

Moreover, for the time-dependent weight
\[
w(t,x,v) = \langle x-(t+1)v \rangle^l,
\]
we define the weighted anisotropic Besov norm by
\[
\|f\|_{\tilde{L}_T^q \tilde{L}_{v,\eta}^p(B_{p,r,w}^s)}
= \left( \sum_{j \ge -1} \big( 2^{js} \| w(t,x,v)\Delta_j f \|_{L_t^q L_{v,\eta,y}^p} \big)^r \right)^{\frac{1}{r}},\ t\in[0,T],
\]
again with the usual modification for $ p, q, r=\infty.$ Throughout the paper, we use $L^{p,w}$ to denote the weighted $L^p$ space.

\medskip

In particular, the case $r=1$ and $s=\frac{2}{p}$ (with $1 \le p < \infty$) plays a central role in this paper. We will mainly work in the spaces
\[
\tilde{L}_{v,\eta}^p(B_{p,1,w_0}^{2/p})
\quad \text{and} \quad
\tilde{L}_T^\infty \tilde{L}_{v,\eta}^p(B_{p,1,w}^{2/p}).
\]
\end{definition}

Note that for fixed $(t,v,\eta)$, the following weighted embedding holds: $$ B_{p,1,w}^{2/p}\hookrightarrow L_{y}^{\infty,w},$$
where, in the case that $w$ only depends on $y$, the result is proved in \cite[pp.~156]{et-96}. More precisely, we have
\begin{equation}\label{Embedding inequailty}
          \norm{w(v)f}_{L_y^{\infty}}\leq C \sum_{j\geq
        -1} 2^{\frac{2j}{p}}\norm{w(v)\Delta_j f}_{L_{y}^p}.
        \end{equation}
        and a detailed proof is provided in Appendix \ref{sec-app}.


        \section{Nonlinear Estimates}\label{sec-nones}
In this section, we establish the key estimates for the nonlinear Boltzmann collision operator $Q(f,f)$. The main idea is to exploit the bilinear dissipation functional \eqref{D_T(f,g)} to control the bilinear operator $Q(f,g)$. This approach allows us to effectively capture the nonlinear interactions through a unified dissipative structure.
The resulting estimates are valid for $p\in(1,+\infty)$ with $\gamma\in\left(\frac{3}{p}-2,\frac{2}{p}-1\right)$, as well as for the endpoint case $p=1$ and $\gamma=1$.

        \begin{proposition}\label{prop-Nonhomogeneous term}
          For $p\in (1,+\infty)$ with $\gamma\in \left(\dfrac{3}{p}-2,\dfrac{2}{p} -1\right)$ or $p=1$ with $\gamma = 1$, then for any $f,g$, and $T\geq0$, it holds that
          $$ \int_{0}^{T}\sum_{j\geq-1}2^{\frac{2j}{p}} \norm{w(v)\Delta_j Q(f,g)}_{L_{v,x}^p}dt \lesssim \mathscr{D}_T(f,g), $$
          where $\mathscr{D}_T(f,g)$ is defined by \eqref{Energy2}. In particular,
          \begin{equation}\label{Estimate of Q(f,f)}
            \int_{0}^{T}\sum_{j\geq-1}2^{\frac{2j}{p}} \norm{w(v)\Delta_j Q(f,f)}_{L_{v,x}^p}dt \lesssim \mathscr{D}_T(f).
          \end{equation}
        \end{proposition}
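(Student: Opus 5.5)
We split $Q(f,g)=Q_{\mathrm{gain}}(f,g)-Q_{\mathrm{loss}}(f,g)$ and treat both through the same three ingredients. First, a Bony decomposition in $y$: since $Q$ is local in $(t,x)$ and acts only on velocity, for fixed $(t,\eta,v,u,\omega)$ each term is a pointwise product in $y$ of $f(\cdot)$ and $g(\cdot)$ evaluated at some velocities. We write
\[
\Delta_j\big(F G\big)=\sum_{|k-j|\le 4}\Delta_j(S_{k-1}F\,\Delta_k G)+\sum_{|k-j|\le 4}\Delta_j(\Delta_k F\, S_{k-1}G)+\sum_{k,k'\ge j-3,\,|k-k'|\le1}\Delta_j(\Delta_kF\,\Delta_{k'}G).
\]
Second, the weights are transferred between velocities. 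We use $\langle x-(t+1)v\rangle^l\lesssim \langle x-(t+1)v'\rangle^l\langle x-(t+1)u'\rangle^l$, or the analogous bound with $(v,u)$ for the loss term. This follows from $v$ being a convex-type combination in the collision geometry, or simply from $\langle a\rangle\lesssim\langle a-b\rangle\langle b\rangle$, and is the kind of weight estimate collected in the Appendix. It lets both factors carry their own weight $w(t,x,\cdot)$, as in $\mathscr D_T$. Third, for each dyadic piece we use Bernstein/Hölder in $y$ only, so that the $L^p_y$ norm of a product is bounded by $\|\cdot\|_{L^p_y}\|\cdot\|_{L^\infty_y}$. The $L^\infty_y$ factor of a block is then bounded by $2^{2k/p}\|\Delta_k\cdot\|_{L^p_y}$, which is exactly the critical Bernstein step behind \eqref{Embedding inequailty}. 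Summing $2^{2j/p}$ over $j$ against the three paraproduct pieces yields $\sum_{j,k}2^{2(j+k)/p}\|w\Delta_j f\|_{L^p_y}\|w\Delta_k g\|_{L^p_y}$ pointwise in $(t,\eta,v,u)$. For the remainder term we use $\sum_{j\le k}2^{2j/p}\lesssim 2^{2k/p}$, and here $2/p>0$ is used.

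The loss term is then handled directly. We need
\[
\Big\|\int |v-u|^\gamma F(v)G(u)\,du\Big\|_{L^p_{v,\eta}},\qquad F(v)=\|w\Delta_jf(v)\|_{L^p_y},\ G(u)=2^{2k/p}\|w\Delta_k g(u)\|_{L^p_y},
\]
to be bounded by $\||v-u|^{1/p}F(v)G(u)\|_{L^p_{v,u,\eta}}$ after the $L^1_t$ integration. The point is that the quantities in $\mathscr D_T$ are $L^p$ in $t$, while the left side is $L^1_t$. We therefore gain integrability in $t$ and $u$ from the weights. We write $|v-u|^\gamma=|v-u|^{1/p}\cdot|v-u|^{\gamma-1/p}$ and apply Hölder in $(t,u)$ with exponent $p'$ on the leftover factor
\[
|v-u|^{\gamma-1/p}\,\langle x-(t+1)u\rangle^{-l}\quad(\text{together with }\langle x-(t+1)v\rangle^{-l}).
\]
The change of variables $u\mapsto x-(t+1)u$ produces $(1+t)^{-3}$, and this makes the time integral converge. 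The conditions $l>1-1/p$ and $l>3-4/p+\gamma$, together with $\gamma\in(3/p-2,2/p-1)$, are exactly what is needed for the integral
\[
\int_0^\infty\!\!\int\Big(|v-u|^{\gamma-1/p}\langle x-(t+1)u\rangle^{-l}\cdots\Big)^{p'}du\,dt
\]
to converge uniformly in $(x,v)$: local integrability of $|v-u|^{(\gamma-1/p)p'}$ in $\R^3$, plus decay at infinity. In the endpoint case $p=1$, $\gamma=1$, we have $|v-u|^\gamma=|v-u|^{1/p}$ and no Hölder step is needed.

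The gain term is reduced to the same structure by the standard pre/post-collisional change of variables $(v,u,\omega)\mapsto(v',u',\omega)$. With cutoff $\mathbf b(\cos\theta)\lesssim|\cos\theta|$ and $|v-u|=|v'-u'|$, we dualize the $L^p_v$ norm against $h\in L^{p'}_v$ and move the variables onto $f(v')g(u')$. Then $|v'-u'|^{1/p}$ appears and the remaining kernel is handled as above, with the Jacobian factor controlled by the $|\cos\theta|$ cutoff. \textbf{Main obstacle:} the gain-term step. The $L^p$ gain bounds of Hardy–Littlewood–Sobolev type, with the $(t+1)$-dependent weights, must be arranged so that the $(v',u')$ variables carry precisely $|v'-u'|^{1/p}$ in $L^p$ and the remainder falls in $L^{p'}$ with the stated ranges of $\gamma$ and $l$. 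We would try first to pass to $\sigma$-representation and use the bound $|v-v'|\sim|v-u||\cos\theta|$ to absorb the singular factor.
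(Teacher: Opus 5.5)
Your loss-term argument and the paraproduct/embedding/summation step agree with the paper. You apply H\"older in $u$ against $\moc{v-u}^{\gamma-\frac1p}w^{-1}(u)$, whose $L^{p'}_u$ norm is $\lesssim(t+1)^{(a-3)/p'}$ with $a=(\frac1p-\gamma)p'$, and then use Bony, \eqref{Embedding inequailty} and Lemma~\ref{Sum operator}. The scaling gives $(t+1)^{a-3}$, not $(t+1)^{-3}$, because the homogeneity of $\moc{v-u}^{-a}$ contributes $(t+1)^{a}$; the time integral therefore converges exactly when $a<2$, i.e.\ $\gamma>\frac3p-2$.

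The genuine gap is the gain term, which is the heart of the proposition, and your second ingredient points the wrong way there. The bound $w(v)\le w(v')w(u')$ is true; it follows from $\moc{x-(t+1)v}^2+\moc{x-(t+1)u}^2=\moc{x-(t+1)v'}^2+\moc{x-(t+1)u'}^2$. But if you use it, the H\"older kernel left after putting $w(v')w(u')$ on $f(v')g(u')$ is just $\moc{v-u}^{\gamma-\frac1p}$, and $\int_{\R^3}\moc{v-u}^{-a}\,du=\infty$. Unlike the loss kernel, nothing decays in the integration variable $u$ except the ratio $w(v)w^{-1}(v')w^{-1}(u')$ itself, so that ratio must be kept and its decay proved. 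The crude bound suffices only at the endpoint $p=1$, $\gamma=1$, where no H\"older step occurs; this is how the paper treats Case 2.

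What is missing is the paper's Lemma~\ref{lem-gain}:
\[
\sup_{x,v}\norm{\moc{v-u}^{\gamma-\frac1p}w(v)w^{-1}(v')w^{-1}(u')}_{L^{p'}_u}\lesssim (t+1)^{-\gamma+\frac1p-\frac3{p'}}.
\]
Set $z=u-v$ and $s=x-(t+1)v$. Then $v'-v=(\omega\cdot z)\omega=:z_\parallel$ and $u'-v=z-z_\parallel=:z_\perp$. After scaling out $t+1$, the bound reduces to $\int_{\R^3}\moc{z}^{-a}\langle s\rangle^{b}\langle s-z_\parallel\rangle^{-b}\langle s-z_\perp\rangle^{-b}\,dz\lesssim 1$ uniformly in $s$, with $b=lp'$.

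The paper proves this in coordinates adapted to $(\omega,s)$. It first integrates out the direction orthogonal to both $\omega$ and $s$, which needs $a>1$, i.e.\ $\gamma<\frac2p-1$. This is the only place that upper bound on $\gamma$ is used, and your proposal does not pinpoint it. It then splits $\moc{y}\lessgtr\frac12\max\{\moc{s_1},\moc{s_2}\}$ using $b>1$ and $a+b>3$.

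With that bound in hand, the gain term goes exactly like the loss term: H\"older in $u$ at fixed $\omega$, H\"older in $t$, then the change $(v,u)\mapsto(v',u')$. For fixed $\omega$ this map is a unit-Jacobian involution preserving $\moc{v-u}$. So no duality, $\sigma$-representation, or Jacobian control through $\moc{\cos\theta}$ is needed. The local singularity of $\moc{v-u}^{\gamma-\frac1p}$ is also not the difficulty, since it lies in $L^{p'}_{\mathrm{loc}}$ because $a<3$. The real difficulty is decay at infinity with uniformity in $s$, and your proposal leaves that open.
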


        \begin{proof}
    We write for $p\in[1,+\infty)$
          \begin{align}
            \int_{0}^{T}&\sum_{j\geq-1}2^{\frac{2j}{p}} \norm{w(v)\Delta_j Q(f,g)}_{L_{v,x}^p}dt\notag\\
             &\leq  \int_{0}^{T}\sum_{j\geq-1}2^{\frac{2j}{p}} \norm{w(v)\Delta_j Q_{\textrm{gain}}(f,g)}_{L_{v,x}^p}dt+\int_{0}^{T}\sum_{j\geq-1}2^{\frac{2j}{p}} \norm{w(v)\Delta_j Q_{\textrm{loss}}(f,g)}_{L_{v,x}^p}dt\notag\\
             &:= I_1 + I_2.\label{ni1-2}
          \end{align}
          The computation is divided into following two cases.

          \noindent\underline{{\it Case 1: $p\in (1,+\infty)$ with $\gamma\in \left(\dfrac{3}{p}-2,\dfrac{2}{p} -1\right)$.}} We estimate $I_1$ and $I_2$ in \eqref{ni1-2} individually.
For $I_1$, applying Bony's decomposition, we obtain
          $$fg=\sum_{i\geq -1} S_{i-1} f\Delta_ig +\sum_{i\geq -1} S_{i-1} g\Delta_if + \sum_{\moc{i-i'}\leq 1} \Delta_i f\Delta_{i'}g,$$ where $S_{i}f=\sum_{-1\leq k \leq i-1} \Delta_k f$. Using H$\ddot{\text{o}}$lder's inequality, Minkowski's inequality, and Lemma \ref{lem-gain}, it follows
          \begin{align*}
             I_1&= \int_{0}^{T}\sum_{j\geq-1}2^{\frac{2j}{p}} \norm{w(v)\Delta_j Q_{\textrm{gain}}(f,g)}_{L_{v,x}^p}dt\\
                &\lesssim \sum_{j\geq-1}2^{\frac{2j}{p}}\int_{\mathbb{S}^2} \moc{\mathbf{b}(\cos \theta)}d\omega\\
                &\qquad\times\int_{0}^{T} \norm{\int_{\mathbb{R}^3}w(v)w^{-1}(v')w^{-1}(u') \moc{v-u}^{\gamma}w(v')w(u')\Delta_j(f(v')g(u'))  du}_{L_{v,x}^p}dt\\
                &\lesssim \sum_{j\geq-1}2^{\frac{2j}{p}}\int_{\mathbb{S}^2} \moc{\mathbf{b}(\cos \theta)}d\omega\int_{0}^{T} \norm{
                \norm{ \moc{v-u}^{\frac{1}{p}}w(v')w(u')\Delta_j(f(v')g(u'))}_{L_{u}^p}I_{\textrm{gain}}
                }_{L_{v,x}^p}dt\\
                &\lesssim  \sum_{j\geq-1}2^{\frac{2j}{p}}\int_{\mathbb{S}^2} \moc{\mathbf{b}(\cos \theta)}d\omega\\
                &\qquad\times\int_{0}^{T}(t+1)^{-\gamma  + \frac{1}{p} - \frac{3}{p'}} \norm{
                \norm{ \moc{v-u}^{\frac{1}{p}}w(v')w(u')\Delta_j(f(v')g(u'))}_{L_{u}^p}
                }_{L_{v,x}^p}dt\\
                &\lesssim \sum_{j\geq-1}2^{\frac{2j}{p}}\int_{\mathbb{S}^2} \moc{\mathbf{b}(\cos \theta)}d\omega\norm{
                 \moc{v-u}^{\frac{1}{p}}w(v')w(u')\Delta_j(f(v')g(u'))
                }_{L_{t,u,v,x}^p}\\
                &\lesssim \sum_{j\geq-1}2^{\frac{2j}{p}}\norm{
                 \moc{v-u}^{\frac{1}{p}}w(v)w(u)\Delta_j(f(v)g(u))
                }_{L_{t,u,v,x}^p}\\
                &\lesssim \sum_{j\geq-1}2^{\frac{2j}{p}}\norm{
                 \moc{v-u}^{\frac{1}{p}}w(v)w(u)\Delta_j\left(\sum_{i\geq -1} S_{i-1} f(v)\Delta_ig(u)\right)
                }_{L_{t,u,v,x}^p}\\
                &+\sum_{j\geq-1}2^{\frac{2j}{p}}\norm{
                 \moc{v-u}^{\frac{1}{p}}w(v)w(u)\Delta_j\left(\sum_{i\geq -1} S_{i-1} g(u)\Delta_if(v)\right)
                }_{L_{t,u,v,x}^p}\\
                &+\sum_{j\geq-1}2^{\frac{2j}{p}}\norm{
                 \moc{v-u}^{\frac{1}{p}}w(v)w(u)\Delta_j\left(\sum_{\moc{i-i'}\leq 1} \Delta_i f(v) \Delta_{i'}g(u)\right)
                }_{L_{t,u,v,x}^p}\\
                &:= I_{1,1}+I_{1,2}+I_{1,3},
          \end{align*}
         where
\[
I_{\mathrm{gain}}
= \big\| |v-u|^{-\frac{1}{p}+\gamma}\, w(v)\, w^{-1}(v')\, w^{-1}(u') \big\|_{L_u^{p'}},
\]
whose estimate is provided by Lemma \ref{lem-gain}. We now estimate the terms $I_{1,1}$, $I_{1,2}$, and $I_{1,3}$ separately.

For $I_{1,1}$, due to the definition of  Littlewood-Paley's decomposition and inequality \eqref{bd - w(v)Delta_k}, we have
          \begin{align*}
             I_{1,1}&\lesssim \sum_{j\geq -1} \sum_{\moc{i-j}\leq 4}2^{\frac{2j}{p}}\norm{\moc{v-u}^{\frac{1}{p}}w(v)S_{i-1}f(v) w(u)\Delta_ig(u)}_{L_{t,u,v,x}^p}\\
                   &\lesssim \sum_{j\geq -1} \sum_{\moc{i-j}\leq 4}2^{\frac{2j}{p}}\norm{\moc{v-u}^{\frac{1}{p}}\norm{w(v)S_{i-1}f(v)}_{L_y^{\infty}} \norm{w(u)\Delta_ig(u)}_{L_y^p}}_{L_{t,u,v,\eta}^p},
          \end{align*}
          By the embedding inequality \eqref{Embedding inequailty},
          \begin{align*}
            \norm{w(v)S_{i-1}f}_{L^{\infty}_{y}}&\lesssim \sum_{k\geq -1}2^{\frac{2k}{p}} \norm{w(v)\Delta_k S_{i-1}f}_{L^{p}_y}=\sum_{k\geq -1}2^{\frac{2k}{p}} \norm{w(v)S_{i-1}\Delta_k f}_{L^{p}_y} \\
            &\lesssim \sum_{k\geq -1}2^{\frac{2k}{p}} \norm{w(v)\Delta_k f}_{L^{p}_y},
          \end{align*}
          thus, in view of Lemma \ref{Sum operator}, we arrive at
          \begin{align*}
             I_{1,1} &\lesssim \sum_{j,k\geq -1}\sum_{\moc{i-j}\leq 4}2^{\frac{2(j+k)}{p}} \norm{\moc{v-u}^{\frac{1}{p}}\norm{w(v)\Delta_k f(v)}_{L_y^{p}} \norm{w(u)\Delta_ig(u)}_{L_y^p}}_{L_{t,u,v,\eta}^p}\lesssim \mathscr{D}_T(f,g).
          \end{align*}
For $I_{1,2}$, similarly, we get from \eqref{Embedding inequailty}, \eqref{bd - w(v)Delta_k} and Lemma \ref{Sum operator} that
          \begin{align*}
           I_{1,2}&\leq \sum_{j\geq-1}2^{\frac{2j}{p}}\norm{\moc{v-u}^{\frac{1}{p}}w(v)w(u)\Delta_j\left(\sum_{i\geq -1} S_{i-1}g(u)\Delta_if(v)\right)}_{L_{t,u,v,x}^p}\\
                 &\lesssim \sum_{j\geq -1}\sum_{\moc{i-j}\leq 4}2^{\frac{2j}{p}}\norm{\moc{v-u}^{\frac{1}{p}}w(u)S_{i-1}g(u)w(v)\Delta_i f(v)}_{L_{t,u,v,x}^p}\\
                 &\lesssim \sum_{j\geq -1}\sum_{\moc{i-j}\leq 4}2^{\frac{2j}{p}}\norm{\moc{v-u}^{\frac{1}{p}}\norm{w(v)S_{i-1}g(u)}_{L_{y}^{\infty}}\norm{w(v)\Delta_i f(v)}_{L_{y}^{p}}}_{L_{t,u,v,\eta}^p}\\
                 &\lesssim \sum_{j\geq -1}\sum_{\moc{i-j}\leq 4}2^{\frac{2j}{p}}\norm{\moc{v-u}^{\frac{1}{p}}\sum_{k\geq -1} 2^{\frac{2k}{p}}\norm{w(u)\Delta_k g(u)}_{L_{y}^{p}}\norm{w(v)\Delta_i f(v)}_{L_{y}^{p}}}_{L_{t,u,v,\eta}^p}\\
                 &\lesssim \sum_{j,k\geq -1}\sum_{\moc{i-j}\leq 4}2^{\frac{2(j+k)}{p}}\norm{\moc{v-u}^{\frac{1}{p}} \norm{w(u)\Delta_k g(u)}_{L_{y}^{p}}\norm{w(v)\Delta_i f(v)}_{L_{y}^{p}}}_{L_{t,u,v,\eta}^p}\\
                 &\lesssim \mathscr{D}_T(f,g).\\
          \end{align*}
As for $I_{1,3}$, employing Lemma \ref{Sum operator} again, one has
          \begin{align*}
            I_{1,3}&=\sum_{j\geq-1}2^{\frac{2j}{p}}\norm{
                 \moc{v-u}^{\frac{1}{p}}w(v)w(u)\Delta_j\left(\sum_{\moc{i-i'}\leq 1} \Delta_i f(v)\Delta_{i'}g(u)\right)
                }_{L_{t,u,v,x}^p}\\
                  &\lesssim \sum_{j\geq -1}\sum_{\max\{i,i'\}\geq j-2}\sum_{\moc{i-i'}\leq1} 2^{\frac{2j}{p}}\norm{
                 \moc{v-u}^{\frac{1}{p}}w(v)\Delta_i f(v)w(u)\Delta_{i'}g(u)
                }_{L_{t,u,v,x}^p}\\
                  &\lesssim \sum_{j\geq -1}\sum_{\max\{i,i'\}\geq j-2}\sum_{\moc{i-i'}\leq1} 2^{\frac{2j}{p}}\norm{
                 \moc{v-u}^{\frac{1}{p}}\norm{ w(v)\Delta_i f(v)}_{L^p_y}\norm{w(u)\Delta_{i'}g(u)}_{L_y^{\infty}}
                }_{L_{t,u,v,\eta}^p}\\
                  &\lesssim \sum_{j\geq -1}\sum_{i \geq j-3} 2^{\frac{2j}{p}}\norm{
                 \moc{v-u}^{\frac{1}{p}}\norm{w(v)\Delta_i f(v)}_{L^p_y}\norm{w(u)g(u)}_{L_y^{\infty}}
                }_{L_{t,u,v,\eta}^p}\\
                  &\lesssim \sum_{j\geq -1}\sum_{i \geq j-3} 2^{\frac{2j}{p}}\norm{
                 \moc{v-u}^{\frac{1}{p}}\norm{ w(v)\Delta_i f(v)}_{L^p_y}\sum_{k\geq -1}2^{\frac{2k}{p}}\norm{w(u)\Delta_k g(u)}_{L_y^{p}}
                }_{L_{t,u,v,\eta}^p}\\
                  &\lesssim \sum_{j,k\geq -1}\sum_{i \geq j-3} 2^{\frac{2(j+k)}{p}}\norm{
                 \moc{v-u}^{\frac{1}{p}}\norm{ w(v)\Delta_i f(v)}_{L^p_y}\norm{w(u)\Delta_k g(u)}_{L_y^{p}}
                }_{L_{t,u,v,\eta}^p}\\
                  &\lesssim \mathscr{D}_T(f,g).
          \end{align*}
 Consequently, it follows
\begin{align}\label{i1-bd}
 I_1 \lesssim I_{1,1} + I_{1,2} + I_{1,3} \lesssim \mathscr{D}_T(f,g).
\end{align}
We now turn to compte $I_2$ in this case. We first get from Lemma \ref{lem-loss} that
          \begin{align*}
            I_2&=\int_{0}^{T}\sum_{j\geq-1}2^{\frac{2j}{p}} \norm{w(v)\Delta_j Q_{\textrm{loss}}(f,g)}_{L_{v,x}^p}dt\\
               &\lesssim \sum_{j\geq-1} 2^{\frac{2j}{p}}\int_{0}^{T}\norm{w(v)\int_{\mathbb{R}^3\times \mathbb{S}^2}\moc{v-u}^{\gamma}\mathbf{b}(\cos \theta)\Delta_j(f(v)g(u)) dud\omega}_{L_{v,x}^p}dt\\
               &\lesssim \sum_{j\geq-1} 2^{\frac{2j}{p}}\norm{\mathbf{b}(\cos\theta)}_{L^1(\mathbb{S}^2)}\notag\\
               &\qquad\times\int_{0}^{T}\norm{\int_{\mathbb{R}^3}\moc{v-u}^{\frac{1}{p}}w(v)w(u)\Delta_j(f(v)g(u)) \moc{v-u}^{-\frac{1}{p}+\gamma}w^{-1}(u)dw}_{L_{v,x}^{p}}dt\\
               &\lesssim \sum_{j\geq -1} 2^{\frac{2j}{p}}\int_{0}^{T}
               \norm{\norm{\moc{v-u}^{\frac{1}{p}}w(v)w(u)\Delta_j(f(v)g(u))}_{ L_{u}^p}\norm{\moc{v-u}^{-\frac{1}{p}+\gamma}w^{-1}(u)}_{L_{u}^{p'}}}_{L_{v,x}^p}dt\\
               &\lesssim \sum_{j\geq -1} 2^{\frac{2j}{p}}\int_{0}^{T}\norm{\norm{\moc{v-u}^{\frac{1}{p}}w(v)w(u)\Delta_j(f(v)g(u))  }_{L^p_u}I_{\textrm{loss}}}_{L_{v,x}^p}dt\\
               &\lesssim \sum_{j\geq -1} 2^{\frac{2j}{p}}\int_{0}^{T}(t+1)^{-\gamma + \frac{1}{p} - \frac{3}{p'}}\norm{\moc{v-u}^{\frac{1}{p}}w(v)w(u)\Delta_j(f(v)g(w))}_{L_{u,v,x}^p}dt\\
               &\lesssim \sum_{j\geq -1} 2^{\frac{2j}{p}}\norm{\moc{v-u}^{\frac{1}{p}}w(v)w(u)\Delta_j(f(v)g(u))}_{L_{t,u,v,x}^p}.
          \end{align*}
          Then performing the similar calculation as for obtaining the estimation for \eqref{i1-bd}, we see that $I_2$ is also bounded by $C\mathscr{D}_T(f,g)$ for $C>0$.

\noindent\underline{{\it Case 2: $p=1$ with $\gamma=1$.}}
        The argument in this case is similar to that of {\it Case 1}. For the gain term $I_1$, we first note that
          $$w(v)w^{-1}(v')w^{-1}(u')=\left(\dfrac{\langle x-(t+1)v\rangle}{\langle x-(t+1)v'\rangle\langle x-(t+1)u'\rangle}\right)^{l}\lesssim 1,
           $$
           hence
          \begin{align*}
            I_1&=\int_{0}^{T} \sum_{j\geq-1} 2^{2j}\norm{w(v)\Delta_jQ_{\textrm{gain}}(f,g)}_{L_{v,x}^1}dt\\
               &\lesssim \sum_{j\geq-1}2^{2j}\int_{\mathbb{S}^2} \moc{\mathbf{b}(\cos \theta)}d\omega\\
                &\quad \quad\times \int_{0}^{T} \norm{
                \norm{ \moc{v-u}w(v')w(u')\Delta_j(f(v')g(u'))}_{L_{u}^1}\norm{w(v)w^{-1}(v')w^{-1}(u')}_{L_u^{\infty}}
                }_{L_{v,x}^1}dt\\
                &\lesssim \sum_{j\geq-1}2^{2j}\int_{\mathbb{S}^2} \moc{b(\cos \theta)}d\omega
                \norm{ \moc{v-u}w(v')w(u')\Delta_j(f(v')g(u'))}_{L_{t,u,v,x}^1}\\
                &\lesssim \sum_{j\geq-1}2^{2j}\norm{ \moc{v-u}w(v)w(u)\Delta_j(f(v)g(u))}_{L_{t,u,v,x}^1}.
          \end{align*}
Similarly, for the loss term $I_2$, we obtain
          \begin{align*}
            I_2&=\int_{0}^{T} \sum_{j\geq-1} 2^{2j}\norm{w(v)\Delta_jQ_{\textrm{loss}}(f,g)}_{L_{v,x}^1}dt\\
               &\lesssim \sum_{j\geq-1}2^{2j} \int_{0}^{T} \norm{
                \norm{ \moc{v-u}w(v)w(u)\Delta_j(f(v)g(u))}_{L_u^1}\norm{w^{-1}(u)}_{L_u^{\infty}}
                }_{L_{v,x}^1}dt\\
                &\lesssim \sum_{j\geq-1}2^{2j}
                \norm{\moc{v-u}w(v)w(u)\Delta_j(f(v)g(u))}_{L_{t,u,v,x}^1}.
          \end{align*}
Consequently,
          \begin{equation}
            I_1 + I_2 \lesssim \sum_{j\geq-1}2^{2j}\norm{\moc{v-u}w(v)w(u)\Delta_j(f(v)g(u))}_{L_{t,u,v,x}^1}.\notag
          \end{equation}
Arguing as in the estimate leading to \eqref{i1-bd}, we deduce that
          $$ \sum_{j\geq-1}2^{2j}\norm{\moc{v-u}w(v)w(u)\Delta_j(f(v)g(u))}_{L_{t,u,v,x}^1}\lesssim \mathscr{D}_T(f,g). $$
 Therefore,
          $$ \int_{0}^{T}\sum_{j\geq-1}2^{2j} \norm{w(v)\Delta_j Q(f,g)}_{L_{v,x}^1}dt \lesssim \mathscr{D}_T(f,g).$$
This ends the proof of Proposition \ref{prop-Nonhomogeneous term}.
        \end{proof}

            \section{Bilinear dissipation and the {\it a priori} estimate}\label{ape-sec}
  In this section, we establish key bilinear dissipation estimates and derive global {\it a priori} energy-type bounds for solutions of the rescaled Boltzmann equation \eqref{Change variable BE}. Building on the functional framework introduced earlier, we first obtain estimates for the bilinear dissipation functional
$\mathscr{D}_T(f,g)$ in terms of the initial and temporal energy norms. These estimates then allow us to control the nonlinear term in the equation and, under a suitable smallness assumption, to close the {\it a priori} energy estimates. 

       \begin{proposition}\label{Estimation of D_T(f,g) prop}
Suppose that $f$ and $g$ are global strong solutions to \eqref{Change variable BE} with initial data $f_0$ and $g_0$, respectively. Let $p\in (1,+\infty)$ with $\gamma\in \left(\dfrac{3}{p}-2,\dfrac{2}{p} -1\right)$ or $p=1$ with $\gamma = 1$. Then, for any $T\geq0$, it holds that
\begin{equation}\label{Estimation of D_T(f,g)}
\mathscr{D}_T(f,g) \lesssim \mathscr{E}_0(f)\mathscr{E}_0(g)
+ \mathscr{E}_T(f)\mathscr{E}_T(g)
+ \mathscr{E}_T(f)\mathscr{E}_T^{1-\frac{1}{p}}(g)\mathscr{D}_T^{\frac{1}{p}}(g)
+ \mathscr{E}_T(g)\mathscr{E}_T^{1-\frac{1}{p}}(f)\mathscr{D}_T^{\frac{1}{p}}(f).
\end{equation}
In particular, for any $T\ge 0$, we have
\begin{equation}\label{Estimation of D_T(f,f)}
\mathscr{D}_T(f) \lesssim \mathscr{E}_0^2(f)
+ \mathscr{E}_T^2(f)
+ \mathscr{E}_T^{2-\frac{1}{p}}(f)\mathscr{D}_T^{\frac{1}{p}}(f).
\end{equation}
\end{proposition}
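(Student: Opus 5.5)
We prove \eqref{Estimation of D_T(f,g)} with a div-curl (interaction Morawetz) argument applied to the localized, weighted quantities $F_j(t,x,v):=|w(t,x,v)\Delta_j f(t,x,v)|^p$ and $G_k(t,x,u):=|w(t,x,u)\Delta_k g(t,x,u)|^p$. Since $w=\langle x-(t+1)v\rangle^l$ satisfies $(\partial_t+v\cdot\nabla_x)w=0$, applying $\Delta_j$ (which acts only in $y$ and commutes with the transport operator) to \eqref{Change variable BE} gives
\begin{equation*}
(\partial_t + v_1\partial_\eta + v_y\cdot\nabla_y)F_j = p\,|w\Delta_j f|^{p-2}(w\Delta_j f)\, w\Delta_j Q(f,f)=:R_j,
\end{equation*}
and similarly for $G_k$ with source $S_k$. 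We integrate out $y$ and set $\rho_j(t,\eta,v):=\int_{\R^2}F_j\,dy$ and $\sigma_k(t,\eta,u):=\int_{\R^2}G_k\,dy$. These satisfy one-dimensional transport equations in $\eta$ with speeds $v_1$ and $u_1$, namely $\partial_t\rho_j+v_1\partial_\eta\rho_j=\int R_j\,dy$. Here we use that in the rotated frame $|v-u|=|v_1-u_1|$, which is the purpose of the change of variables.

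The div-curl lemma (Lemma \ref{div-curl Lemma}, in its 1D form) is then applied to the pair of space-time currents $(\rho_j,v_1\rho_j)$ and $(\sigma_k,u_1\sigma_k)$. Concretely, we take the interaction functional $\iint \mathrm{sgn}(\eta-\eta')\rho_j(\eta)\sigma_k(\eta')\,d\eta\,d\eta'$ and differentiate it in time. This yields
\begin{align*}
\int_0^T\!\!\int|v_1-u_1|\,\rho_j(t,\eta,v)\sigma_k(t,\eta,u)\,d\eta\,dt
\lesssim{}& \|\rho_j(0)\|_{L^1_\eta}\|\sigma_k(0)\|_{L^1_\eta}+\|\rho_j(T)\|_{L^1_\eta}\|\sigma_k(T)\|_{L^1_\eta}\\
&+\int_0^T\Big(\|\textstyle\int R_j\,dy\|_{L^1_\eta}\|\sigma_k\|_{L^1_\eta}+\|\rho_j\|_{L^1_\eta}\|\textstyle\int S_k\,dy\|_{L^1_\eta}\Big)dt .
\end{align*}
Integrating in $(u,v)$, the left-hand side becomes exactly the $p$-th power of the $(j,k)$ summand of \eqref{D_T(f,g)}, and the boundary terms become $\|w\Delta_jf\|_{L^p_{v,x}}^p\|w\Delta_kg\|_{L^p_{u,x}}^p$ at $t=0$ and $t=T$.

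For the source terms, Hölder's inequality gives $\int|R_j|\lesssim \|w\Delta_jf\|_{L^p_{v,x}}^{p-1}\|w\Delta_jQ(f,f)\|_{L^p_{v,x}}$. Bounding the $g$-factor by $\sup_t$, we obtain at fixed $(j,k)$
\begin{equation*}
\|w\Delta_j f\|_{L^\infty_TL^p}^{p-1}\,\|w\Delta_kg\|^p_{L^\infty_TL^p}\int_0^T\|w\Delta_jQ(f,f)\|_{L^p}\,dt .
\end{equation*}
We then take $p$-th roots, using $(a+b+c)^{1/p}\le a^{1/p}+b^{1/p}+c^{1/p}$, multiply by $2^{2(j+k)/p}$, and sum over $j,k$. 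The boundary terms give $\mathscr{E}_0(f)\mathscr{E}_0(g)+\mathscr{E}_T(f)\mathscr{E}_T(g)$ directly. For the source term we bound
\begin{equation*}
\big(2^{2j/p}\|w\Delta_jf\|_{L^\infty_TL^p}\big)^{1-\frac1p}\Big(2^{2j/p}\int_0^T\|w\Delta_jQ\|_{L^p}\,dt\Big)^{\frac1p}
\end{equation*}
and sum over $j$ by Hölder in $\ell^1$ with exponents $p/(p-1)$ and $p$. This gives $\mathscr{E}_T^{1-\frac1p}(f)\big(\int_0^T\sum_j2^{2j/p}\|w\Delta_jQ(f,f)\|_{L^p}\,dt\big)^{1/p}$, while the sum over $k$ yields $\mathscr{E}_T(g)$. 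Finally we apply Proposition \ref{prop-Nonhomogeneous term}, specifically \eqref{Estimate of Q(f,f)}, to replace the time integral of the collision term by $\mathscr{D}_T(f)$. This produces the term $\mathscr{E}_T(g)\mathscr{E}_T^{1-1/p}(f)\mathscr{D}_T^{1/p}(f)$, and the symmetric term comes from $S_k$. Setting $g=f$ gives \eqref{Estimation of D_T(f,f)}. When $p=1$ the same computation works, with $R_j$ replaced by $\mathrm{sgn}(\Delta_jf)\,w\Delta_jQ$.

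The main obstacle is the div-curl step: the interaction functional has to be justified on strong solutions with only $L^1_\eta$ control of $\rho_j,\sigma_k$, and the resulting bound must be uniform in $(v,u)$ so that the weight $|v_1-u_1|$ appears exactly. We would first prove the inequality for smooth, compactly supported data and then pass to the limit. A second delicate point is that $\Delta_j$ must commute with multiplication by $w$ closely enough for the transport identity to hold. The weight is not a function of $y$ alone, so we would write $w\Delta_jf$ and handle $\Delta_j(wf)$ versus $w\Delta_jf$ by the same commutation bound \eqref{bd - w(v)Delta_k} already used in Section \ref{sec-nones}. Since $w$ is transported exactly, the transport identity holds for $w\Delta_jf$ with no commutator terms.
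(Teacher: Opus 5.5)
Your proposal is correct and follows the paper's proof essentially step by step. The steps are: the weighted localization $w\Delta_j f$, which is transported exactly by $\partial_t+v\cdot\nabla_x$; integration in $y$ to obtain one-dimensional transport in $\eta$; the div-curl interaction estimate; H\"older in $(v,x)$ with $\sup_t$ bounds, $p$-th roots, and H\"older in $\ell^1$ with exponents $p/(p-1)$ and $p$; and finally Proposition \ref{prop-Nonhomogeneous term}. The only cosmetic difference is that you derive the interaction inequality directly from $\iint \mathrm{sgn}(\eta-\eta')\rho_j(\eta)\sigma_k(\eta')\,d\eta\,d\eta'$, which yields $|v_1-u_1|$ without needing the orientation $v_1-u_1=|v-u|$, whereas the paper quotes Lemma \ref{div-curl Lemma} and then uses that orientation.
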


        \begin{proof} It suffices to prove \eqref{Estimation of D_T(f,g)}.
Recalling \eqref{ww}, we see that $f$ and $g$ satisfy
\begin{align}\label{f-eq}
(\partial_t + v\cdot \nabla_x)\big(w(v)\Delta_j f\big)
= w(v)\Delta_j Q(f,f),
\end{align}
with initial data $f(0,x,v)=f_0(x,v)$, and
\begin{align}\label{g-eq}
(\partial_t + u\cdot \nabla_x)\big(w(u)\Delta_k g\big)
= w(u)\Delta_k Q(g,g),
\end{align}
with $g(0,x,u)=g_0(x,u)$.

We multiply \eqref{f-eq} and \eqref{g-eq} by
\[
p\,|w(v)\Delta_j f|^{p-2} w(v)\Delta_j f
\quad \text{and} \quad
p\,|w(u) \Delta_k g|^{p-2} w(u) \Delta_kg,
\]
respectively. Integrating the resulting equations over the variable $y\in\mathbb{R}^2$, we obtain
          \begin{equation}
            \left\{
            \begin{array}{ll}
             \partial_t \int_{\mathbb{R}^2} \moc{w(v)\Delta_j f(v)}^{p}dy + &v_1 \cdot \partial_{\eta}\int_{\mathbb{R}^2}\moc{ w(v)\Delta_j f(v)}^pdy\\[2ex]& = \int_{\mathbb{R}^2}p\moc{w(v)\Delta_j f(v)}^{p-2}w(v)\Delta_j f(v) w(v)\Delta_j Q(f,f)dy,\\[2ex]
             \partial_t \int_{\mathbb{R}^2}\moc{ w(u)\Delta_k g(u)}^{p}dy + &u_1 \cdot \partial_{\eta}\int_{\mathbb{R}^2}\moc{ w(u)\Delta_k g(u)}^pdy\\[2ex]&= \int_{\mathbb{R}^2}p\moc{w(u)\Delta_k g(u)}^{p-2}w(u)\Delta_k g(u) w(u)\Delta_k Q(g,g)dy .
            \end{array}
            \right.\notag
          \end{equation}
 Then div-curl Lemma \ref{div-curl Lemma} yields
          \begin{align*}
       & \int_0^T \int_{-\infty}^{+\infty}(v_1-u_1)\left(\int_{\mathbb{R}^2}\moc{ w(v)\Delta_j f(v)}^pdy\right)\left(\int_{\mathbb{R}^2}\moc{ w(u)\Delta_k g(u)}^{p}dy\right) d\eta dt \\
                &\quad\quad\leq \left| \int_{\eta < \eta'} \int_{\mathbb{R}^2} \moc{w_0(v)\Delta_j f_0(\eta,v)}^{p}dy \int_{\mathbb{R}^2}\moc{ w_0(u)\Delta_k g_0(\eta',u)}^{p}dy  d\eta d\eta' \right| \\
                &\quad\quad+\left| \int_{\eta < \eta'} \int_{\mathbb{R}^2} \moc{w(v)\Delta_j f(T,\eta,v)}^{p}dy \int_{\mathbb{R}^2}\moc{ w(u)\Delta_k g(T,\eta',u)}^{p}dy  d\eta d\eta' \right| \\
                &\quad\quad+\left| \int_0^T \int_{-\infty}^{+\infty} \left( \int_{-\infty}^{\eta} \int_{\mathbb{R}^2} \moc{w(v)\Delta_j f(t,\eta',v)}^{p}dy d\eta' \right)\right.\\
                &\quad\quad\quad\quad\quad\quad\quad\quad\times \left. \left(\int_{\mathbb{R}^2}p\moc{w(u)\Delta_k g(u)}^{p-2}w(u)\Delta_k g(t,\eta,u) w(u)\Delta_k Q(g,g)dy\right)d\eta dt \right| \\
                &\quad\quad+\left| \int_0^T \int_{-\infty}^{+\infty} \left( \int_{\eta}^{+\infty}\int_{\mathbb{R}^2}\moc{ w(u)\Delta_k g(t,\eta',u)}^{p}dy d\eta' \right)
       \right.\\&\quad\quad\quad\quad\quad\quad\quad\quad\times \left. \left(\int_{\mathbb{R}^2}p\moc{w(v)\Delta_j f(t,\eta,v)}^{p-2}w(v)\Delta_j f(v) w(v)\Delta_j Q(f,f)dy\right)d\eta dt \right|\\
                &\quad\quad\lesssim\norm{w_0(v)\Delta_j f_0(\eta,v)}^p_{L_x^p}  \norm{w_0(u)\Delta_k g_0(\eta,u)}^p_{L_{x}^p} + \norm{w(v)\Delta_j f(T,v)}^p_{L_x^p}\norm{w(u)\Delta_k g(T,u)}^p_{L_x^p}\\
                &\quad\quad+\int_{0}^{T} \norm{w(v)\Delta_j f(v)}^p_{L_{x}^p}\norm{w(u)\Delta_k g(u)}^{p-1}_{L_{x}^p}\norm{w(u)\Delta_k Q(g,g)}_{L_{x}^p}dt\\
                &\quad\quad+\int_{0}^{T} \norm{w(u)\Delta_k g(u)}^p_{L_{x}^p}\norm{w(v)\Delta_j f(v)}^{p-1}_{L_{x}^p}\norm{w(v)\Delta_j Q(f,f)}_{L_{x}^p}dt,
        \end{align*}
this together with $v_1-u_1=|v-u|$
further implies
            \begin{align*}
               &\norm{\moc{v-u}^{\frac{1}{p}}\norm{w(v)\Delta_j f(v)}_{L_y^{p}}\norm{w(u)\Delta_k g(u)}_{L_y^{p}}}^p_{L^p_{t,\eta}}\\
               &\quad\quad=\int_{\mathbb{R}}\int_{\mathbb{R}}\moc{v-u}\left(\int_{\mathbb{R}^2} \moc{w(v)\Delta_j f(v)}^p dy\right)\left(\int_{\mathbb{R}^2}\moc{w(u)\Delta_k g(u)}^pdy\right)dtd\eta\\
               &\quad\quad=\moc{\int_{\mathbb{R}}\int_{\mathbb{R}}(v_1-u_1)\left(\int_{\mathbb{R}^2} \moc{w(v)\Delta_j f(v)}^p dy\right)\left(\int_{\mathbb{R}^2}\moc{w(u)\Delta_k g(u)}^pdy\right)dtd\eta}\\
                &\quad\quad\lesssim\norm{ w_0(v)\Delta_j f_0(v) }^p_{L_x^p}\cdot \norm{w_0(u)\Delta_k g_0(u)}^p_{L_{x}^p} + \norm{w(v)\Delta_j f(T,v)}^p_{L_x^p}\norm{w(u)\Delta_k g(T,u)}^p_{L_x^p}\\
                &\quad\quad\quad+\int_{0}^{T} \norm{w(v)\Delta_j f(v)}^p_{L_{x}^p}\norm{w(u)\Delta_k g(u)}^{p-1}_{L_{x}^p}\norm{w(u)\Delta_k Q(g,g)}_{L_{x}^p}dt\\
                &\quad\quad\quad+\int_{0}^{T} \norm{w(u)\Delta_k g(u)}^p_{L_{x}^p}\norm{w(v)\Delta_j f(v)}^{p-1}_{L_{x}^p}\norm{w(v)\Delta_j Q(f,f)}_{L_{x}^p}dt.
            \end{align*}
Consequently, it follows
            \begin{align*}
               &\norm{\moc{v-u}^{\frac{1}{p}}\norm{w(v)\Delta_j f(v)}_{L_y^{p}}\norm{w(u)\Delta_k g(u)}_{L_y^{p}}}_{L^p_{t,u,v,\eta}}\\
               &\lesssim \norm{w_0(v)\Delta_j f_0(v)}_{L_{v,x}^p}\norm{w_0(u)\Delta_j g_0(u)}_{L_{u,x}^p} + \norm{w(v)\Delta_j f(T,v)}_{L_{v,x}^p}\norm{w(u)\Delta_k g(T,u)}_{L_{u,x}^p}\\
               &+\left(\int_{0}^{T} \norm{w(v)\Delta_j f(v)}^p_{L_{v,x}^p}\norm{w(u)\Delta_k g(u)}^{p-1}_{L_{u,x}^p}\norm{w(u)\Delta_k Q(g,g)}_{L_{u,x}^p}dt\right)^{\frac{1}{p}}\\
                &+\left(\int_{0}^{T} \norm{w(u)\Delta_k g(u)}^p_{L_{u,x}^p}\norm{w(v)\Delta_j f(v)}^{p-1}_{L_{v,x}^p}\norm{w(v)\Delta_j Q(f,f)}_{L_{v,x}^p}dt\right)^{\frac{1}{p}}\\
                &\lesssim \norm{w_0(v)\Delta_j f_0(v)}_{L_{v,x}^p}\norm{w_0(u)\Delta_k g_0(u)}_{L_{u,x}^p} + \norm{w(v)\Delta_j f(T,v)}_{L_{v,x}^p}\norm{w(u)\Delta_k g(T,u)}_{L_{u,x}^p}\\
               &+\norm{w(v)\Delta_j f(v)}_{L_{T}^{\infty}L_{v,x}^p}\norm{w(u)\Delta_k g(u)}^{1-\frac{1}{p}}_{L_{T}^{\infty}L_{u,x}^p} \left(\int_{0}^{T} \norm{w(u)\Delta_k Q(g,g)}_{L^p_{u,x}}dt \right)^{\frac{1}{p}}   \\
                &+ \norm{w(u)\Delta_k g(u)}_{L_{T}^{\infty}L_{u,x}^p}\norm{w(v)\Delta_j f(v)}^{1-\frac{1}{p}}_{L_{T}^{\infty}L_{v,x}^p}
                 \left(\int_{0}^{T} \norm{w(v)\Delta_j Q(f,f)}_{L^p_{v,x}}dt \right)^{\frac{1}{p}}
            \end{align*}
                    Recalling the definitions \eqref{Energy1}, \eqref{Energy1-T=0} and \eqref{D_T(f,g)}, and applying H\"{o}lder's inequality
                    , we get
             \begin{align}
               \mathscr{D}_T(f,g)&\lesssim \mathscr{E}_0(f)\mathscr{E}_0(g) +  \mathscr{E}_T(f)\mathscr{E}_T(g) \notag\\
               &\quad\quad+ \mathscr{E}_T(f)\left(\sum_{k\geq -1}2^{\frac{2k}{p}}\norm{w(u)\Delta_k g(u)}^{\frac{p-1}{p}}_{L_{T}^{\infty}L_{u,x}^p}\left(\int_{0}^{T} \norm{w(u)\Delta_j Q(g,g)}_{L_{u,x}^p}dt \right)^{\frac{1}{p}}  \right) \notag\\
               &\quad\quad+\mathscr{E}_T(g)\left(\sum_{j\geq -1}2^{\frac{2j}{p}}\norm{w(v)\Delta_j f(v)}^{\frac{p-1}{p}}_{L_{T}^{\infty}L_{v,x}^p}\left(\int_{0}^{T} \norm{w(v)\Delta_j Q(f,f)}_{L_{v,x}^p}dt\right)^{\frac{1}{p}}  \right)\notag\\
               & \lesssim \mathscr{E}_0(f)\mathscr{E}_0(g) +  \mathscr{E}_T(f)\mathscr{E}_T(g)
               + \mathscr{E}_T(f)\mathscr{E}^{1-\frac{1}{p}}_T(g)\left(\int_{0}^{T}\sum_{k\geq -1}2^{\frac{2k}{p}} \norm{w(u)\Delta_k Q(g,g)}_{L_{u,x}^p}dt\right)^{\frac{1}{p}} \notag \\
               &\quad\quad+\mathscr{E}_T(g)\mathscr{E}^{1-\frac{1}{p}}_T(f) \left(\int_{0}^{T}\sum_{j\geq -1}2^{\frac{2j}{p}} \norm{w(v)\Delta_j Q(f,f)}_{L_{v,x}^p}dt \right)^{\frac{1}{p}}.\label{bi-diss-es}
             \end{align}
Finally, combining \eqref{bi-diss-es} with \eqref{Estimate of Q(f,f)}, we obtain \eqref{Estimation of D_T(f,g)}.
The estimate \eqref{Estimation of D_T(f,f)} then follows immediately as a direct consequence of \eqref{Estimation of D_T(f,g)}.
This completes the proof of Proposition \ref{Estimation of D_T(f,g) prop}.
        \end{proof}
With Proposition \ref{Estimation of D_T(f,g) prop} at hand, we now derive the global {\it a priori} energy-type estimates for \eqref{Change variable BE}, under the {\it a priori} assumption that
\begin{align}\label{aps}
\mathscr{E}^2_T(f)+ \mathscr{D}_T(f) \le \varepsilon_0,
\end{align}
for all $T \ge 0$, where $\varepsilon_0>0$ is sufficiently small. For results in this direction, we have the following theorem.

\begin{theorem}\label{L^p Energy Estimate for}
Assume that $f$ is a strong solution to \eqref{Change variable BE} satisfying \eqref{aps}. Then, for $p\in (1,+\infty)$ with $\gamma\in \left(\dfrac{3}{p}-2,\dfrac{2}{p} -1\right)$ or $p=1$ with $\gamma = 1$, it holds that
\begin{equation}\label{priori estimate 2}
\mathscr{E}^2_T(f) + \mathscr{D}_T(f)
\lesssim \mathscr{E}^2_0(f).
\end{equation}
\end{theorem}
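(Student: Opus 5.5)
We combine an energy estimate for $\mathscr{E}_T(f)$ with the dissipation bound \eqref{Estimation of D_T(f,f)} of Proposition \ref{Estimation of D_T(f,g) prop}, and then absorb the nonlinear terms using the smallness \eqref{aps}.

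\textbf{Energy bound.} We apply $\Delta_j$ to \eqref{Change variable BE} and multiply by $w(t,x,v)$. Since $(\partial_t+v\cdot\nabla_x)w=0$, the function $w(v)\Delta_j f$ is transported exactly as in \eqref{f-eq}. We multiply by $p|w\Delta_j f|^{p-2}w\Delta_j f$ and integrate over $(x,v)$. The transport term is a divergence in $x$ and therefore vanishes. This gives
\[
\frac{d}{dt}\norm{w\Delta_j f}^p_{L^p_{v,x}}\le p\norm{w\Delta_j f}^{p-1}_{L^p_{v,x}}\norm{w\Delta_jQ(f,f)}_{L^p_{v,x}},
\]
and hence $\norm{w\Delta_j f}_{L^\infty_TL^p_{v,x}}\le \norm{w_0\Delta_j f_0}_{L^p_{v,x}}+\int_0^T\norm{w\Delta_jQ(f,f)}_{L^p_{v,x}}dt$. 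For $p=1$ this step is a direct $L^1$ computation. We multiply by $2^{2j/p}$, sum over $j\ge -1$, and invoke \eqref{Estimate of Q(f,f)} of Proposition \ref{prop-Nonhomogeneous term}. The result is
\[
\mathscr{E}_T(f)\lesssim \mathscr{E}_0(f)+\mathscr{D}_T(f),\qquad\text{so}\qquad \mathscr{E}_T^2(f)\lesssim \mathscr{E}_0^2(f)+\mathscr{D}_T^2(f).
\]

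\textbf{Dissipation bound.} By \eqref{Estimation of D_T(f,f)},
\[
\mathscr{D}_T(f)\lesssim \mathscr{E}_0^2(f)+\mathscr{E}_T^2(f)+\mathscr{E}_T^{2-\frac1p}(f)\mathscr{D}_T^{\frac1p}(f).
\]
The term $\mathscr{E}_T^2(f)$ on the right cannot be absorbed directly into the left-hand side, because it carries no small factor. We therefore substitute the energy bound from the first step, which replaces it by $\mathscr{E}_0^2(f)+\mathscr{D}_T^2(f)$. By \eqref{aps}, $\mathscr{D}_T^2(f)\le\varepsilon_0\mathscr{D}_T(f)$, so this piece can be absorbed.

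For the cubic-type term, Young's inequality with exponents $p$ and $p'$ gives
\[
\mathscr{E}_T^{2-\frac1p}\mathscr{D}_T^{\frac1p}\le \delta\mathscr{D}_T+C_\delta\mathscr{E}_T^{(2-\frac1p)p'} .
\]
Since $(2-\tfrac1p)p'=\tfrac{2p-1}{p-1}>2$, we have $\mathscr{E}_T^{(2-\frac1p)p'}\le \varepsilon_0^{c}\mathscr{E}_T^2$ for some $c>0$ by \eqref{aps}. The factor $\mathscr{E}_T^2$ is then handled again through the energy bound. In the endpoint case $p=1$ the term is simply $\mathscr{E}_T\mathscr{D}_T\le\varepsilon_0^{1/2}\mathscr{D}_T$.

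\textbf{Closing.} Choosing $\delta$ and $\varepsilon_0$ small yields $\mathscr{D}_T(f)\lesssim\mathscr{E}_0^2(f)$. Feeding this back into the energy bound gives $\mathscr{E}_T^2\lesssim \mathscr{E}_0^2+\varepsilon_0\mathscr{D}_T\lesssim\mathscr{E}_0^2$, which is \eqref{priori estimate 2}.

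\textbf{Main obstacle.} The delicate point is the absorption of $\mathscr{E}_T^2(f)$ in \eqref{Estimation of D_T(f,f)}. It works only because the energy bound controls $\mathscr{E}_T$ by $\mathscr{E}_0$ plus the \emph{linear} quantity $\mathscr{D}_T$, which is small by \eqref{aps}. Without this, the $\mathscr{E}_T^2$ term would have no small factor to absorb against. The argument must therefore run in this order: energy bound first, then the dissipation bound using it, and finally the bootstrap.
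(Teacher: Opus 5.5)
Your proof is correct and follows the paper's argument. You derive the weighted $L^p$ energy estimate for $w\Delta_j f$, which uses the transport invariance of $w$ and is controlled by \eqref{Estimate of Q(f,f)}, substitute it into the $\mathscr{E}_T^2$ term of \eqref{Estimation of D_T(f,f)}, and close with the smallness \eqref{aps}. The only difference is in execution: you use the cruder bound $\mathscr{E}_T\lesssim\mathscr{E}_0+\mathscr{D}_T$ and absorb term by term with Young's inequality, whereas the paper keeps $\mathscr{E}_T\lesssim\mathscr{E}_0+\mathscr{E}_T^{1-1/p}\mathscr{D}_T^{1/p}$ and notes that every nonlinear term in \eqref{priori estimate to} is superlinear in $\mathscr{E}_T^2+\mathscr{D}_T$. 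For that reason, your closing remark that the argument works \emph{only} because of an energy bound linear in $\mathscr{D}_T$ is overstated.
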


        \begin{proof}
          Note that
\begin{equation}
(\partial_t + v \cdot \nabla_x)\big(w(v)\Delta_j f\big)
= w(v)\Delta_j Q(f,f).
\notag
\end{equation}
Multiplying the above identity by
$p|w(v)\Delta_j f|^{p-2}w(v)\Delta_j f$
and integrating over $(x,v)\in \mathbb{R}^3\times\mathbb{R}^3$, we obtain
\begin{align*}
\partial_t \| w(v)\Delta_j f \|_{L_x^p}^p
= p \int_{\mathbb{R}^3} |w(v)\Delta_j f|^{p-2} w(v)\Delta_j f \, w(v)\Delta_j Q(f,f)\, dx.
\end{align*}
Integrating in time and using H\"{o}lder's inequality, it follows that
\begin{align*}
\| w(v)\Delta_j f \|_{L_{v,x}^p}
&\lesssim \| w_0(v)\Delta_j f_0 \|_{L_{v,x}^p} \\
&\quad + \| w(v)\Delta_j f \|_{L_T^\infty L_{v,x}^p}^{\frac{p-1}{p}}
\left( \int_0^T \| w(v)\Delta_j Q(f,f) \|_{L_{v,x}^p} \, dt \right)^{\frac{1}{p}}.
\end{align*}

Summing over $j\ge -1$ with weights $2^{\frac{2j}{p}}$, and applying H\"{o}lder's inequality together with Proposition \ref{Estimate of Q(f,f)}, we deduce
\begin{align}\label{ENERGY ESTIMATE}
\mathscr{E}_T(f)
&= \sum_{j\ge -1} 2^{\frac{2j}{p}} \| w(v)\Delta_j f \|_{L_T^\infty L_{v,x}^p} \notag\\
&\lesssim \mathscr{E}_0(f)
+ \mathscr{E}_T^{1-\frac{1}{p}}(f)
\left( \sum_{j\ge -1} 2^{\frac{2j}{p}} \int_0^T \| w(v)\Delta_j Q(f,f) \|_{L_{v,x}^p} \, dt \right)^{\frac{1}{p}} \notag\\
&\lesssim \mathscr{E}_0(f) + \mathscr{E}_T^{1-\frac{1}{p}}(f)\mathscr{D}_T^{\frac{1}{p}}(f).
\end{align}

Combining this with \eqref{Estimation of D_T(f,f)} in Proposition \ref{Estimation of D_T(f,g) prop}, we obtain
\begin{equation}\label{priori estimate to}
\mathscr{E}_T^2(f) + \mathscr{D}_T(f)
\lesssim \mathscr{E}_0^2(f)
+ \mathscr{E}_T^{2-\frac{2}{p}}(f)\mathscr{D}_T^{\frac{2}{p}}(f)
+ \mathscr{E}_T^{2-\frac{1}{p}}(f)\mathscr{D}_T^{\frac{1}{p}}(f).
\end{equation}

Under the {\it a priori} assumption \eqref{aps}, it follows that \eqref{priori estimate to} yields \eqref{priori estimate 2}.
This completes the proof of Theorem \ref{L^p Energy Estimate for}.
\end{proof}

    \section{The global existence}\label{sec-glo}
        In this section, we will establish the global-in-time existence of solution to the Boltzmann equation \eqref{Change variable BE} for all $t>0$. The construction of the global solution is based on the global {\it a priori} established in Theorem \ref{L^p Energy Estimate for} in Section \ref{ape-sec} and a uniform bilinear estimate for the following sequence of iterating approximate solutions:
        \begin{equation}\label{approximate equation-BE}
          \left\{\begin{array}{rl}
          \partial_t f^{n+1} + v\cdot\nabla_x f^{n+1} + f^{n+1}(v)&\int_{\mathbb{R}^3\times \mathbb{S}^2} \moc{v-u}^{\gamma}\mathbf{b}(\cos\theta) f^{n}(u)dud\omega
                                                        \\
                                                        &=\int_{\mathbb{R}^3\times \mathbb{S}^2} \moc{v-u}^{\gamma}\mathbf{b}(\cos\theta) f^n(v')f^{n}(u')dud\omega,  \\
           f^{n+1}(0,x,v)&=f_0(x,v),\ n\geq0,
        \end{array}\right.
        \end{equation}
        starting with $f^0(t,x,v) = 0$.

        \begin{lemma}\label{ESTIMATE OF M_0}
          The solution sequence $\{f^n\}_{n=1}^{\infty}$ is well defined. For $p\in (1,+\infty)$ with $\gamma\in \left(\dfrac{3}{p}-2,\dfrac{2}{p} -1\right)$ or $p=1$ with $\gamma = 1$. Then there exists a sufficiently small constant $\vps>0$ such that if
          $$\mathscr{E}_0(f_0)=\norm{f_0}_{\tilde{L}_{v,\eta}^p(B_{p,1,w_0}^{2/p})} \leq \vps, $$
     it holds for any $T\geq0$ and all $n\in\mathbb{N}$ that
          \begin{equation}\label{Y_T(f)'s estimate.}
            \mathscr{E}_T(f^n) \leq C\vps,\  \mathscr{D}_T(f^n)\leq C\vps^2,\quad\quad \forall T\in \mathbb{R}^{+},
          \end{equation}
     where $C>0$ is a constant independent of $n$.
        \end{lemma}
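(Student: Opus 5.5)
We argue by induction on $n$. The claim is that the uniform bounds \eqref{Y_T(f)'s estimate.} propagate from $f^n$ to $f^{n+1}$, with the same constant $C$, provided $\vps$ is small.

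\textbf{Well-definedness and the base case.} Since $f^0=0$, $f^1$ solves the free transport equation with data $f_0$, so $f^1(t,x,v)=f_0(x-tv,v)$. The weight is transported exactly: $w(t,x,v)=\langle x-(t+1)v\rangle^l=w_0(x-tv,v)$. Because $\Delta_j$ acts only in $y$ and commutes with the translation $x\mapsto x-tv$, we get $\mathscr{E}_T(f^1)=\mathscr{E}_0$. The bilinear estimate for $\mathscr{D}_T(f^1)$ then follows from the div-curl argument of Proposition \ref{Estimation of D_T(f,g) prop}, with no source terms, giving $\mathscr{D}_T(f^1)\lesssim \mathscr{E}_0^2$.

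For general $n$, with $f^n$ given, \eqref{approximate equation-BE} is a linear transport equation for $f^{n+1}$. It has a damping coefficient $a^n(t,x,v)=\int|v-u|^\gamma\mathbf b f^n(u)\,du\,d\omega$ and a known source $Q_{\rm gain}(f^n,f^n)$. Solving along characteristics, with the exponential integrating factor, gives existence and uniqueness of $f^{n+1}$, once we know that $a^n$ and the source are finite. Both facts follow from the bounds on $f^n$ via Proposition \ref{prop-Nonhomogeneous term}.

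\textbf{Energy step.} Write the equation as
\[
(\partial_t+v\cdot\nabla_x)f^{n+1}=Q_{\rm gain}(f^n,f^n)-Q_{\rm loss}(f^{n+1},f^n).
\]
Apply $w\Delta_j$, multiply by $p|w\Delta_jf^{n+1}|^{p-2}w\Delta_jf^{n+1}$, integrate, and sum as in the proof of Theorem \ref{L^p Energy Estimate for}. Estimating the gain and loss parts separately through the proof of Proposition \ref{prop-Nonhomogeneous term}, which bounds $I_1$ and $I_2$ individually and bilinearly, we get
\[
\mathscr{E}_T(f^{n+1})\lesssim \mathscr{E}_0+\mathscr{E}_T^{1-\frac1p}(f^{n+1})\big(\mathscr{D}_T(f^n)+\mathscr{D}_T(f^{n+1},f^n)\big)^{\frac1p}.
\]
We do not use a sign of the loss term here; we simply treat it as a source.

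\textbf{Dissipation step.} We redo the div-curl computation of Proposition \ref{Estimation of D_T(f,g) prop} for the pairs $(f^{n+1},f^{n+1})$ and $(f^{n+1},f^n)$, each component solving its own transport equation with the sources above. This yields
\[
\mathscr{D}_T(f^{n+1},g)\lesssim \mathscr{E}_0\mathscr{E}_0(g)+\mathscr{E}_T(f^{n+1})\mathscr{E}_T(g)+\text{(terms of the form }\mathscr{E}\,\mathscr{E}^{1-1/p}\,\mathscr{D}^{1/p}\text{)},
\]
where $g\in\{f^n,f^{n+1}\}$ and the $\mathscr{D}$ factors range over $\mathscr{D}_T(f^n)$, $\mathscr{D}_T(f^{n+1},f^n)$ and $\mathscr{D}_T(f^{n-1},f^n)$ from the previous step.

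To handle the chain of mixed quantities, we strengthen the induction hypothesis to include $\mathscr{D}_T(f^{n},f^{n-1})\le C\vps^2$. Set $X=\mathscr{E}_T(f^{n+1})$, $Y=\mathscr{D}_T(f^{n+1})+\mathscr{D}_T(f^{n+1},f^n)$, and assume $\mathscr{E}_T(f^n)\le C\vps$ and $\mathscr{D}\le C\vps^2$ for the earlier iterates. The inequalities then take the form
\[
X^2+Y\lesssim \vps^2+X^{2-\frac2p}(\vps^2+Y)^{\frac2p}+\vps X^{1-\frac1p}Y^{\frac1p}+\dots,
\]
and for $\vps$ small this gives $X\le C\vps$ and $Y\le C\vps^2$.

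\textbf{Main obstacle.} This closing step is where the difficulty lies. There is no a priori smallness for $f^{n+1}$ itself: the terms are homogeneous of degree $2$, so bounding $X^{2-2/p}Y^{2/p}$ by Young's inequality gives no small factor. The fix is to keep track of which factor carries $f^n$, which is already small, so that every nonlinear term has at least one $\vps$ factor. If needed, we add a continuity argument in $T$, using that $X(T)$ and $Y(T)$ are continuous in $T$ and small at $T=0$, and conclude with the same bootstrap as in Theorem \ref{L^p Energy Estimate for}.
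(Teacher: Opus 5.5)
Your outline is the paper's proof. It uses the same induction with the strengthened hypothesis $\mathscr{D}_T(f^i,f^{i-1})\le C\vps^2$, the same energy inequality for $\mathscr{E}_T(f^{n+1})$ driven by $\mathscr{D}_T(f^n)+\mathscr{D}_T(f^{n+1},f^n)$, and the same div-curl bounds for the pairs $(f^{n+1},f^n)$ and $(f^{n+1},f^{n+1})$. The closing is best done by your ``fix'' rather than the lumped inequality in $X,Y$, and this is what the paper does: since $\mathscr{D}_T(f^{n+1})$ never appears on the right-hand sides of the first two inequalities, it first derives $\mathscr{D}_T(f^{n+1},f^n)\lesssim \vps^2+\vps\,\mathscr{E}_T(f^{n+1})$ (every term there other than $\mathscr{E}_0^2$ contains a power of $\mathscr{E}_T(f^n)\lesssim\vps$), inserts this into the energy inequality to get $\mathscr{E}_T(f^{n+1})\le C\vps$, and only at the end bounds $\mathscr{D}_T(f^{n+1})$, whose single non-small term $\mathscr{E}_T^2(f^{n+1})$ is by then already $O(\vps^2)$.
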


        \begin{proof}
We argue by induction on $n$. More precisely, we aim to prove the strengthened estimate
\begin{equation}\label{STRENGTHENING INDUCTION}
\mathscr{E}_T(f^i)\leq C\vps,\quad
\mathscr{D}_T(f^i)\leq C\vps^2,\quad
\mathscr{D}_T(f^i,f^{i-1})\leq C\vps^2,
\end{equation}
for all $i\geq 1$, where $C>0$ is independent of $i$.

We start with the proof of \eqref{STRENGTHENING INDUCTION} with $i=1$, since $f^0=0$, then $f^1$ satisfies
         \begin{equation}\label{equation of f^1}
           \partial_t f^1 + v\cdot\nabla_x f^1=0,
         \end{equation}
        with $f^1(0,x,v) = f_0(x,v)$. Using \eqref{equation of f^1}, direct calculation gives
$$ \mathscr{E}_T(f^1) =\norm{f_0}_{\tilde{L}_{v,\eta}^p(B_{p,1,w_0}^{2/p})}\leq \vps.$$

Now consider the system
        $$
        \left\{
        \begin{array}{rcl}
          \partial_t \int_{\mathbb{R}^2} \moc{w(v)\Delta_j f^1(v)}^{p} dy + v_1\cdot \partial_{\eta}\int_{\mathbb{R}^2} \moc{w(v)\Delta_j f^1(v)}^{p}dy&=&0,\\[2ex]
          \partial_t \int_{\mathbb{R}^2} \moc{w(u)\Delta_k f^1(u)}^{p} dy + u_1\cdot \partial_{\eta}\int_{\mathbb{R}^2} \moc{w(u)\Delta_k f^1(u)}^{p}dy&=&0,
        \end{array}
        \right.
        $$
 as deriving \eqref{Estimation of D_T(f,g)} in Proposition \ref{Estimation of D_T(f,g) prop}, one has
        \begin{equation}
          \mathscr{D}_T(f^1)\lesssim \mathscr{E}^2_0(f_0) + \mathscr{E}_T^2(f^1) \lesssim \mathscr{E}^2_0(f_0),\notag
        \end{equation}
and it is clear $ \mathscr{D}_{T}(f^{1},f^0) = 0\leq C\vps^2$ due to $f^0=0.$

Now we assume that \eqref{STRENGTHENING INDUCTION} holds for all $i\leq n$. We prove it for $i=n+1$.
Note that in the following calculations, we make an {\it a priori} assumption that the quantities in \eqref{STRENGTHENING INDUCTION} are bounded for $i=n+1$.

Using $\eqref{approximate equation-BE}_1$, we obtain
          \begin{align}
          \partial_t&\norm{w(v)\Delta_j f^{n+1}(v)}_{L_x^p}\notag\\
           =& p\int_{\mathbb{R}^3}\moc{w(v)\Delta_j f^{n+1}(v)}^{p-2}w(v)\Delta_j f^{n+1}(v)w(v)\Delta_j(Q_{\textrm{gain}}(f^n,f^n) - Q_{\textrm{loss}}(f^{n+1},f^n))dx,\notag
          \end{align}
which further gives
          \begin{align*}
            \norm{w(v)\Delta_j f^{n+1}(v)}_{L_{v,x}^p}&\lesssim \norm{w_0(v)\Delta_j f_{0}(v)}_{L_{v,x}^p}+ \norm{w(v)\Delta_j f^{n+1}(v)}_{L_T^{\infty}L_{v,x}^p}^{\frac{p-1}{p}}\\
            &\qquad\qquad\times\left(\int_{0}^{T} \norm{w(v)\Delta_j (Q_{\textrm{gain}}(f^n,f^n)-Q_{\textrm{loss}}(f^{n+1},f^{n}))}_{L_{vx}^p} dt \right)^{\frac{1}{p}}.
\end{align*}
In view of  H\"{o}lder's inequality and \eqref{Estimate of Q(f,f)}, we get
        \begin{align}\label{Ef_n+1 Estimate}
          \notag  \mathscr{E}_T(f^{n+1})&=\sum_{j\geq - 1} 2^{\frac{2j}{p}} \norm{w(v)\Delta_j f^{n}}_{L_T^{\infty}L_{v,x}^p}\notag\\
                                  &\lesssim \sum_{j\geq - 1} 2^{\frac{2j}{p}} \norm{w_0(v)\Delta_j f_{0}}_{L_{v,x}^p} + \sum_{j\geq - 1} 2^{\frac{2j}{p}}\norm{w(v)\Delta_j f^{n+1}(v)}_{L_T^{\infty}L_{v,x}^p}^{\frac{p-1}{p}}\notag\\
                                  &\quad\quad\times\left(\int_{0}^{T} \norm{w(v)\Delta_j (Q_{\textrm{gain}}(f^n,f^n)-Q_{\textrm{loss}}(f^{n+1},f^{n}))}_{L_{v,x}^p} dt \right)^{\frac{1}{p}}\notag\\
                                  &\lesssim \norm{f_0}_{\tilde{L}_{v,\eta}^p(B_{p,1,w_0}^{2/p})}\notag\\
                                  &\qquad+ \mathscr{E}_{T}^{1-\frac{1}{p}}(f^{n+1})\left(\sum_{j\geq-1}2^{\frac{2j}{p}}\int_{0}^{T} \norm{w(v)\Delta_j(Q_{\textrm{gain}}(f^n,f^n)-Q_{\textrm{loss}}(f^{n+1},f^{n}))}_{L_{v,x}^p} dt \right)^{\frac{1}{p}}\notag\\
                                  &\lesssim \norm{f_0}_{\tilde{L}_{v,\eta}^p(B_{p,1,w_0}^{2/p})} +  \mathscr{E}_{T}^{1-\frac{1}{p}}(f^{n+1})(\mathscr{D}_T(f^n,f^{n+1})+\mathscr{D}_T(f^n,f^n) + \mathscr{D}_T(f^{n+1},f^n))^{\frac{1}{p}}\notag\\
                                  &\lesssim \norm{f_0}_{\tilde{L}_{v,\eta}^p(B_{p,1,w_0}^{2/p})} +  \mathscr{E}_{T}^{1-\frac{1}{p}}(f^{n+1})(\mathscr{D}_T(f^n,f^{n+1})+\mathscr{D}_T(f^n) )^{\frac{1}{p}}\notag\\
                                  &\lesssim \vps +  \mathscr{E}_{T}^{1-\frac{1}{p}}(f^{n+1})(\mathscr{D}_T(f^{n+1},f^n)+\vps^2 )^{\frac{1}{p}}.
          \end{align}
On the other hand, as deriving \eqref{Estimation of D_T(f,g)} and using the induction hypothesis, we obtain
           \begin{align*}
               \mathscr{D}_T(f^{n+1},f^n)
               &\lesssim \norm{f_0}_{\tilde{L}_{v,\eta}^p(B_{p,1,w_0}^{2/p})}^2 +  \mathscr{E}_T(f^{n+1})\mathscr{E}_T(f^n)
               + \mathscr{E}_T(f^{n+1})\mathscr{E}^{1-\frac{1}{p}}_T(f^{n})(\mathscr{D}_T(f^{n},f^{n-1})
               \\
               &\quad+\mathscr{D}_T(f^{n-1}))^{\frac{1}{p}}  +\mathscr{E}_T(f^{n})\mathscr{E}^{1-\frac{1}{p}}_T(f^{n+1})(\mathscr{D}_T(f^{n+1},f^{n}) + \mathscr{D}_T(f^{n}))^{\frac{1}{p}} \\
               &\lesssim \vps^2 + \vps\mathscr{E}_T(f^{n+1}) + \vps^{1+\frac{2}{p}}\mathscr{E}^{1-\frac{1}{p}}_T(f^{n+1}) + \vps\mathscr{E}_T^{1-\frac{1}{p}}(f^{n+1})\mathscr{D}^{\frac{1}{p}}_T(f^{n+1},f^{n}) \\
               &\lesssim \vps^2 + \vps\mathscr{E}_T(f^{n+1}) + \vps\mathscr{D}_T(f^{n+1},f^{n}),
             \end{align*}
     hence, for $\vps$ suitably small, it follows
          \begin{equation}\label{ Estimate of D_T(f_n+1,f_n)}
            \mathscr{D}_T(f_{n+1},f_n)\lesssim \vps^2 + \vps \mathscr{E}_T(f_{n+1}).
          \end{equation}
\eqref{ Estimate of D_T(f_n+1,f_n)} together with \eqref{Ef_n+1 Estimate} gives
          \begin{align*}
            \mathscr{E}_T(f^{n+1})&\lesssim \vps + \mathscr{E}_{T}^{1-\frac{1}{p}}(f^{n+1})(\vps\mathscr{E}_T(f^{n+1})+\vps^2 )^{\frac{1}{p}}\lesssim \vps +\vps^{\frac{1}{p}}\mathscr{E}_{T}(f^{n+1}) + \vps^{1+\frac{1}{p}},
          \end{align*}
          therefore
          \begin{equation}
            \mathscr{E}_T(f^{n+1})\leq C\vps,\notag
          \end{equation}
          and
          \begin{equation}
            \mathscr{D}_T(f^{n+1},f^n)\lesssim \vps^2 + \vps \mathscr{E}_T(f^{n+1})\leq \vps^2.\notag
          \end{equation}
          Finally, applying Proposition \ref{Estimation of D_T(f,g) prop} once more, we obtain
          \begin{align}
            \mathscr{D}_T(f^{n+1})&\leq \norm{f_0}^2_{\tilde{L}_{v,\eta}^p(B_{p,1,w_0}^{2/p})} + \mathscr{E}_T^2(f^{n+1}) +\mathscr{E}_T^{2-\frac{1}{p}}(f^{n+1})\left( \mathscr{D}_T(f^{n+1},f^{n}) + \mathscr{D}_T(f^{n}) \right)^{\frac{1}{p}}\lesssim \vps^2.\notag
          \end{align}
          By induction, \eqref{Y_T(f)'s estimate.} holds for any $n\geq0$, this finishes the proof of Lemma \ref{ESTIMATE OF M_0}.

        \end{proof}
       With the uniform bounds on the iterative solution sequence established in Lemma \ref{ESTIMATE OF M_0} for the approximate system \eqref{approximate equation-BE}, we are now in a position to prove the global existence for the Cauchy problem \eqref{Change variable BE}.


\begin{proof}[Proof of Theorem \ref{Well-posedeness}]
By the construction of the approximate sequence $\{f^n\}_{n\geq 1}$ through \eqref{approximate equation-BE} and the uniform bounds established in Lemma \ref{ESTIMATE OF M_0}, one can pass to the limit to obtain a function $f(t,x,v)$, which is a distributional solution to \eqref{Change variable BE} with initial data $f(0,x,v)=f_0(x,v)$.

Moreover, the estimate \eqref{eng-es} follows directly from the global \textit{a priori} estimate \eqref{priori estimate 2}. In particular, these bounds imply sufficient regularity so that the distributional solution is indeed a strong solution.

Next we show the norm $\mathscr{E}_{t}(f)$ defined in \eqref{Energy1} is continuous in $t$ variable.
For all $t,t_0\in [0,T]$ with $T>0$, similar to obtain \eqref{ENERGY ESTIMATE}, we have
\begin{equation}\label{E_t_1 - E_t_2}
            \moc{\mathscr{E}_{t}(f) - \mathscr{E}_{t_0}(f)}\lesssim \vps^{1-\frac{1}{p}}\left(\sum_{j\geq -1} 2^{\frac{2j}{p}} \int_{t_0}^{t} \norm{w(v)\Delta_j Q(f,f)}_{L_{v,x}^p}ds \right)^{\frac{1}{p}},
\end{equation}
With this, it suffices to prove
 \begin{equation}\label{lim_t Qff}
  \lim_{t\to t_0}\left(\sum_{j\geq -1} 2^{\frac{2j}{p}} \int_{t_0}^{t} \norm{w(v)\Delta_j Q(f,f)}_{L_{v,x}^p}ds\right)^{\frac{1}{p}} = 0.
  \end{equation}
  By
  $$ \left(\sum_{j\geq -1} 2^{\frac{2j}{p}} \int_{t_0}^{t} \norm{w(v)\Delta_j Q(f,f)}_{L_{v,x}^p}ds\right)^{\frac{1}{p}}\lesssim \mathscr{D}_T(f)\leq C\mathscr{E}^2_{0}(f), $$
which implies, $\forall \eps > 0,$ there exists a $N>0,$ such that
          $$ \sum_{j\geq N} 2^{\frac{2j}{p}} \int_{t_0}^{t} \norm{w(v)\Delta_j Q(f,f)}_{L_{v,x}^p}ds \leq \eps, $$
          and by absolute continuity of integrals $$\lim_{t\to t_0} \sum_{-1\leq j\leq N-1} 2^{\frac{2j}{p}} \int_{t_0}^{t} \norm{w(v)\Delta_j Q(f,f)}_{L_{v,x}^p}ds=0.$$
Therefore there exists $\delta>0,$ such that $\forall t\in (t_0 - \delta,t_0+\delta)\cap [0,T]$,
          $$\sum_{-1\leq j\leq N-1} 2^{\frac{2j}{p}} \int_{t_0}^{t} \norm{w(v)\Delta_j Q(f,f)}_{L_{v,x}^p}ds\leq \eps.$$
Consequently, it follows
          $$ \sum_{j\geq -1} 2^{\frac{2j}{p}} \int_{t_0}^{t} \norm{w(v)\Delta_j Q(f,f)}_{L_{v,x}^p}ds\leq 2\eps, $$
 then \eqref{lim_t Qff} is valid. Thus $\mathscr{E}_{t}(f)$ is continuous due to \eqref{E_t_1 - E_t_2}.

We now turn to the uniqueness. Suppose that $g$ is another solution to \eqref{Change variable BE} with the same initial data $g(0,x,v)=f_0(x,v)$, satisfying
           $$ \mathscr{E}_T(g)\leq C\vps,\  \mathscr{D}_T(g)\leq C\vps^2, $$
           for any $T\in [0,+\infty).$ Taking the difference of Bolzmann equation \eqref{Change variable BE} for $f$ and $g$, one has
           $$ \left(\partial_t+v\cdot \nabla_x \right)(f-g) = Q(f-g,f)+ Q(g,f-g),$$
           Then, by performing the completely same energy estimate as in Theorem \ref{L^p Energy Estimate for}, it follows that
           \begin{align*}
            \mathscr{E}_T(f-g)&=\sum_{j\geq - 1} 2^{\frac{2j}{p}} \norm{w(v)\Delta_j (f-g)}_{L_T^{\infty}L_{v,x}^p}\\
                              &\lesssim \mathscr{E}_{T}(f-g)^{1-\frac{1}{p}}\left(\sum_{j\geq -1}2^{\frac{2j}{p}} \int_{0}^{T} \norm{w(v)\Delta_j(Q(f-g,f)+ Q(g,f-g))}_{L_{v,x}^p}\right)^{\frac{1}{p}}\\
                              &\lesssim \mathscr{E}_{T}(f-g)^{1-\frac{1}{p}}\left(\mathscr{D}_T(f-g,f)+\mathscr{D}_T(g,f-g)\right)^{\frac{1}{p}},
          \end{align*}
          thus
          \begin{equation}\label{E_T(f-g)}
            \mathscr{E}_T(f-g)\lesssim \mathscr{D}_T(f-g,f)+\mathscr{D}_T(g,f-g).
          \end{equation}
On the other hand, arguing as in the proof of Proposition \ref{Estimation of D_T(f,g) prop}, we deduce
           \begin{align}
            \mathscr{D}_T(f-g,f) \lesssim&  \mathscr{E}_T(f-g)\mathscr{E}_T(f) + \mathscr{E}_T(f-g)\mathscr{E}^{1-\frac{1}{p}}_T(f)\mathscr{D}^{\frac{1}{p}}_T(f)
           \notag\\& +\mathscr{E}_T(f)\mathscr{E}^{1-\frac{1}{p}}_T(f-g)\left(\mathscr{D}_T(f-g,f)
            + \mathscr{D}_T(g,f-g)\right)^{\frac{1}{p}},\notag
          \end{align}
          and
          \begin{align}
            \mathscr{D}_T(g,f-g) \lesssim&  \mathscr{E}_T(f-g)\mathscr{E}_T(g) + \mathscr{E}_T(f-g)\mathscr{E}^{1-\frac{1}{p}}_T(g)\mathscr{D}^{\frac{1}{p}}_T(g)
            \notag\\&+\mathscr{E}_T(g)\mathscr{E}^{1-\frac{1}{p}}_T(f-g)\left(\mathscr{D}_T(f-g,f)
            + \mathscr{D}_T(g,f-g)\right)^{\frac{1}{p}}.\notag
          \end{align}
Using the uniform bounds on $f$ and $g$, we infer
          \begin{align*}
             \mathscr{D}_T(f-g,f)+\mathscr{D}_T(g,f-g)
             \lesssim & (\vps+\vps^{1+\frac{1}{p}})\mathscr{E}_T(f-g)+\vps^{1+\frac{2}{p}}\mathscr{E}_T^{1-\frac{1}{p}}(f-g)
             \\&+\vps\mathscr{E}^{1-\frac{1}{p}}_T(f-g)
             (\mathscr{D}_T(f-g,f)+\mathscr{D}_T(g,f-g))^{\frac{1}{p}}.
          \end{align*}
For $\vps$ sufficiently small, this implies
          \begin{equation}\label{D_T(f-g,f)+D_T(g,f-g)}
            \mathscr{D}_T(f-g,f) + \mathscr{D}_T(g,f-g)\lesssim \vps\mathscr{E}_T(f-g).
          \end{equation}
      Combining \eqref{E_T(f-g)} and \eqref{D_T(f-g,f)+D_T(g,f-g)}, we obtain
          $$  \mathscr{E}_T(f-g)\lesssim  \vps\mathscr{E}_{T}(f-g), $$
          then $\mathscr{E}_T(f-g) = 0,$ and $f\equiv g.$

Finally, the non-negativity of solutions follows from the iterative scheme \eqref{approximate equation-BE} by a standard argument, and we omit the details.  The proof of Theorem \ref{Well-posedeness} is complete.

\end{proof}

    \section{Appendix}\label{sec-app}

In this appendix, we collect several key estimates and lemmas that have been used throughout the previous sections. These include basic integral estimates, bounds on gain and loss terms, the boundedness of a bilinear summation operator on sequence spaces, and the div-curl lemma. Each result is stated clearly for reference and, where appropriate, proofs are provided or outlined.

The following lemma provides a basic integral estimate.
\begin{lemma}\label{Integral Estimate Lemma}
Let $l_1,l_2\in \mathbb{R}$ satisfy $0 \leq l_1 < n$, $l_2>0$, and $l_1+l_2>n$. Then for any $s\in \mathbb{R}^n$, it holds that
\[
\int_{\mathbb{R}^n} \frac{dw}{|w|^{l_1}\langle w - s \rangle^{l_2}}
\lesssim \frac{1}{\langle s \rangle^{l_1+l_2-n}}
\lesssim 1.
\]
\end{lemma}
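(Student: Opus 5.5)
The plan is a direct dyadic splitting of $\mathbb{R}^n$ according to the relative sizes of $|w|$, $|w-s|$ and $|s|$. First I dispose of $|s|\le 2$. There $\langle s\rangle\sim 1$, so it suffices to show the integral is finite uniformly in $s$. Near $w=0$ the factor $|w|^{-l_1}$ is locally integrable because $l_1<n$. Near infinity the integrand is $\lesssim \langle w\rangle^{-l_1-l_2}$, which is integrable because $l_1+l_2>n$. This also gives the second inequality $\lesssim 1$ in every case, since the integral is bounded uniformly in $s$ by the same two observations.

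For $|s|\ge 2$ I split into four regions and bound each by $\langle s\rangle^{n-l_1-l_2}$.

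\emph{(i) $|w|\le |s|/2$.} Here $|w-s|\ge |s|/2$, so $\langle w-s\rangle^{-l_2}\lesssim\langle s\rangle^{-l_2}$. Since $l_1<n$, we get $\int_{|w|\le|s|/2}|w|^{-l_1}\,dw\sim|s|^{n-l_1}$, and the region contributes $\lesssim\langle s\rangle^{n-l_1-l_2}$.

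\emph{(ii) $|w|\ge 2|s|$.} Here $|w-s|\sim|w|$, so the integrand is $\lesssim|w|^{-l_1-l_2}$. Since $l_1+l_2>n$, the tail integral is $\lesssim|s|^{n-l_1-l_2}$.

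\emph{(iii) $|s|/2\le|w|\le 2|s|$ and $|w-s|\ge|s|/2$.} Both factors are comparable to powers of $\langle s\rangle$, and the region has volume $\sim|s|^n$. This gives $\langle s\rangle^{n-l_1-l_2}$ again.

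\emph{(iv) $|w-s|\le|s|/2$.} Here $|w|\sim|s|$, so the region contributes $\sim|s|^{-l_1}\int_{|z|\le|s|/2}\langle z\rangle^{-l_2}\,dz$. When $l_2<n$ this inner integral is $\sim|s|^{n-l_2}$, and the contribution is $\langle s\rangle^{n-l_1-l_2}$, as claimed. Summing (i)--(iv) then yields the first inequality.

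The main obstacle is exactly region (iv) when $l_2\ge n$. There the inner integral is $\log\langle s\rangle$ (if $l_2=n$) or $O(1)$ (if $l_2>n$). So that region only yields $\langle s\rangle^{-l_1}$, possibly times a logarithm, which is weaker than $\langle s\rangle^{n-l_1-l_2}$. Indeed for $l_1=0<n<l_2$ the integral equals a positive constant independent of $s$. Hence the decay part of the statement cannot hold literally in that range, and I would prove it under the additional hypothesis $l_2<n$.

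In the remaining range $l_2\ge n$ I would record the corrected bound $\langle s\rangle^{-l_1}$ (with a $\log\langle s\rangle$ factor when $l_2=n$). By the first step, the uniform bound $\lesssim 1$, which is what the gain and loss weight estimates actually need, holds in all cases.
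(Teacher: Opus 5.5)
Your four-region argument is correct, and there is nothing in the paper to compare it with: the paper says the proof ``is elementary and thus omitted.'' You are also right that the decay claim is false as printed whenever $l_2\ge n$. Region (iv) alone contributes $\sim\langle s\rangle^{-l_1}$ (times $\log\langle s\rangle$ when $l_2=n$), so the sharp bound is $\langle s\rangle^{-\min\{l_1,\,l_1+l_2-n\}}$ for $l_2\neq n$. Your example $l_1=0<n<l_2$ settles it.

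One small imprecision: for $|s|\ge 2$, the uniform bound $\lesssim 1$ does not follow from your first step, because its tail estimate $\langle w\rangle^{-l_1-l_2}$ used $|s|\le 2$. It does follow from (i)--(iv), using $l_1>0$ when $l_2=n$. More directly, split $\mathbb{R}^n$ into $\{|w|\le|w-s|\}$ and $\{|w|>|w-s|\}$; this bounds the integral by $2\int_{\mathbb{R}^n}|z|^{-l_1}\langle z\rangle^{-l_2}\,dz$ uniformly in $s$.

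Your closing claim that the uniform bound is all the weight estimates need is not correct, though. It suffices for Lemma \ref{lem-loss}, and for Step 2 and Case 3 in the proof of Lemma \ref{lem-gain}. But in Case 1 ($a\in(1,2)$), the paper applies this lemma with $n=1$, $l_1=a-1$, $l_2=b=lp'>1$ to get $J_{7,1}\lesssim\langle s_1\rangle^{2-a}$. That decay is exactly what absorbs the factor $\langle s_1\rangle^{b}$ in the numerator.

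This is your false range. The correct bound is $J_{7,1}\lesssim\langle s_1\rangle^{b-a+1}$, which only gives $J_7\lesssim\langle s_1\rangle^{2-a}$. The loss is real, not an artifact of the splitting. Take $s=\sigma\omega$ with $\sigma\ge 1$. The integrand of $J$ is $\gtrsim\sigma^{-a}$ on $\{|u\cdot\omega-\sigma|\le 1,\ |u-(u\cdot\omega)\omega|\le\sigma\}$, a set of volume $2\pi\sigma^2$. Hence $J\gtrsim\sigma^{2-a}$, which is unbounded for $a<2$, i.e.\ for $\gamma>\frac{3}{p}-2$. That is exactly the range $p>1$ of Theorem \ref{Well-posedeness}. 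So the defect you found in this lemma does propagate into the gain estimate; it is not harmless downstream.
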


\begin{proof}
The proof is elementary and thus omitted.
\end{proof}
The next two lemmas follow directly from Lemma \ref{Integral Estimate Lemma}.
        \begin{lemma}[Estimate of $I_{\textrm{loss}}$]\label{lem-loss}
Let $l>\max\left\{ 1-\dfrac{1}{p},3-\dfrac{4}{p}+\gamma\right\},$ then for $\dfrac{4}{p}-3< \gamma\leq \dfrac{1}{p}$ and $p\in (1,+\infty)$, it holds that
          \begin{align}\label{Estimate of I_loss}
           I_{\textrm{loss}}= \norm{\moc{v-u}^{\gamma - \frac{1}{p}}w^{-1}(u)}_{L^{p'}_u}\lesssim (t+1)^{-\gamma+\frac{1}{p}-\frac{3}{p'}}.
          \end{align}
        \end{lemma}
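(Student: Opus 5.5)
The idea is to remove the time dependence by rescaling the integration variable, so that the remaining integral is exactly of the form covered by Lemma \ref{Integral Estimate Lemma}, with constants uniform in $(t,x,v)$. Since $p\in(1,+\infty)$, we have $p'<\infty$. It therefore suffices to bound the $p'$-th power
\[
I_{\textrm{loss}}^{p'}=\int_{\mathbb{R}^3}|v-u|^{(\gamma-\frac1p)p'}\,\langle x-(t+1)u\rangle^{-lp'}\,du .
\]

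\textbf{Step 1 (rescaling).} I will change variables $u'=(t+1)u$. Then $du=(t+1)^{-3}du'$ and $|v-u|=(t+1)^{-1}|s-u'|$, where $s:=(t+1)v$. This gives
\[
I_{\textrm{loss}}^{p'}=(t+1)^{-3-(\gamma-\frac1p)p'}\int_{\mathbb{R}^3}\frac{du'}{|u'-s|^{(\frac1p-\gamma)p'}\,\langle x-u'\rangle^{lp'}} .
\]

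\textbf{Step 2 (apply Lemma \ref{Integral Estimate Lemma}).} Next I translate $w=u'-s$, so that $\langle x-u'\rangle=\langle w-(x-s)\rangle$. I then apply Lemma \ref{Integral Estimate Lemma} with $n=3$, $l_1=(\frac1p-\gamma)p'$ and $l_2=lp'$. The hypotheses reduce exactly to the assumptions of the lemma being proved:
\begin{itemize}
\item $l_1\ge0$ is equivalent to $\gamma\le\frac1p$.
\item $l_1<3$ means $\frac1p-\gamma<3-\frac3p$, which is equivalent to $\gamma>\frac4p-3$.
\item $l_2>0$ holds because $l>1-\frac1p>0$.
\item $l_1+l_2>3$ means $l+\frac1p-\gamma>3-\frac3p$, which is equivalent to $l>3-\frac4p+\gamma$.
\end{itemize}
Hence the integral is bounded by a constant independent of $x$, $s$ and $t$.

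\textbf{Step 3 (conclude).} Taking the $1/p'$-th power yields
\[
I_{\textrm{loss}}\lesssim (t+1)^{-\frac{3}{p'}-\gamma+\frac1p},
\]
which is \eqref{Estimate of I_loss}.

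There is no serious obstacle here. The only points needing care are the bookkeeping of exponents under the scaling, and checking that the bound from Lemma \ref{Integral Estimate Lemma} is uniform in the shift $x-(t+1)v$. That uniformity is immediate from the final bound $\lesssim 1$ in that lemma.
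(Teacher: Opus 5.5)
Your proof is correct and is essentially the paper's argument. The paper likewise reduces $I_{\textrm{loss}}^{p'}$, by a translation and the rescaling $u\mapsto (t+1)u$, to $(t+1)^{(\frac1p-\gamma)p'-3}$ times an integral bounded uniformly by Lemma \ref{Integral Estimate Lemma}, after the same checks $0\le l_1<3$, $l_2>0$, $l_1+l_2>3$; it just translates before rescaling, where you rescale first.
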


        \begin{proof}
Note that $\left(-\gamma+\dfrac{1}{p}+l\right)p'> 3, $ $ 0\leq\left(-\gamma + \dfrac{1}{p} \right)p' < 3,$ and $ lp'>1,$ it follows from Lemma \ref{Integral Estimate Lemma} that
          \begin{align*}
            I_{\textrm{loss}}^{p'}&=\int_{\mathbb{R}^3} \dfrac{du}{\moc{v-u}^{(-\gamma+\frac{1}{p})p'}\langle x - (t+1)u \rangle^{lp'}}\\
                         &=\int_{\mathbb{R}^3} \dfrac{du}{\moc{v-u}^{(-\gamma+\frac{1}{p})p'}\langle x - (t+1)w \rangle^{lp'}}\\
                         &\xlongequal{u:=u-v,\ s = x-(t+1)v} \int_{\mathbb{R}^3} \dfrac{du}{\moc{u}^{(-\gamma+\frac{1}{p})p'}\langle s - (t+1)u \rangle^{lp'}}\\
                         &\xlongequal{u:=\frac{u}{t+1}} \int_{\mathbb{R}^3}(t+1)^{(-\gamma+\frac{1}{p})p'-3} \dfrac{du}{\moc{u}^{(-\gamma+\frac{1}{p})p'}\langle s - u \rangle^{b}}\lesssim (t+1)^{(-\gamma+\frac{1}{p})p'-3},
          \end{align*}
which completes the proof of Lemma \ref{lem-loss}.
        \end{proof}

        \begin{lemma}[Estimate of $I_{\mathrm{gain}}$]\label{lem-gain}
          Let $l>\max\left\{ 1-\dfrac{1}{p},3-\dfrac{4}{p}+\gamma\right\},$ then for $\dfrac{4}{p}-3< \gamma <  \dfrac{2}{p}-1$ and $p\in (1,+\infty)$, it holds that
          \begin{equation}\label{Estimate of I_gain}
            I_{\mathrm{gain}}= \norm{\moc{v-u}^{\gamma - \frac{1}{p}}w(v)w^{-1}(v')w^{-1}(u')}_{L^{p'}_u}\lesssim (t+1)^{-\gamma+\frac{1}{p}-\frac{3}{p'}}.
          \end{equation}
        \end{lemma}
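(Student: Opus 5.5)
We need to bound
\[
I_{\mathrm{gain}}^{p'}=\int_{\R^3}|v-u|^{(\gamma-\frac1p)p'}\,\frac{\langle x-(t+1)v\rangle^{lp'}}{\langle x-(t+1)v'\rangle^{lp'}\langle x-(t+1)u'\rangle^{lp'}}\,du .
\]
The plan is to reduce to the setting of Lemma \ref{lem-loss}. That lemma already gives the bound for the same integrand with the single weight $\langle x-(t+1)u\rangle^{-lp'}$ in place of the ratio of weights. Write $s=x-(t+1)v$, $s'=x-(t+1)v'$ and $s_*'=x-(t+1)u'$.

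\emph{Step 1: Control of the weight ratio.} Energy and momentum conservation give $v'+u'=v+u$ and $|v'|^2+|u'|^2=|v|^2+|u|^2$. Since $x-(t+1)\cdot$ is affine with slope $-(t+1)$, the points $s',s_*',s,s_*$ (with $s_*=x-(t+1)u$) satisfy
\[
s'+s_*'=s+s_*,\qquad |s'|^2+|s_*'|^2=|s|^2+|s_*|^2 .
\]
For the second identity, expand: the cross terms $x\cdot(v'+u')$ match, and the quadratic terms match by energy conservation. Hence
\[
\langle s'\rangle^2\langle s_*'\rangle^2\ge 1+|s'|^2+|s_*'|^2=1+|s|^2+|s_*|^2\ge \tfrac12\langle s\rangle^2+\tfrac12\langle s_*\rangle^2,
\]
which yields
\[
\frac{\langle s\rangle}{\langle s'\rangle\langle s_*'\rangle}\lesssim \min\Bigl\{1,\frac{\langle s\rangle}{\langle s_*\rangle}\Bigr\}.
\]
This is not directly good enough, because it does not produce decay in $s_*$ on the region where $\langle s\rangle \gtrsim \langle s_*\rangle$. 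I would therefore use the sharper bound
\[
\frac{\langle s\rangle}{\langle s'\rangle\langle s_*'\rangle}\lesssim \frac{1}{\min\{\langle s'\rangle,\langle s_*'\rangle\}},
\]
which follows from $\langle s\rangle\le\langle s'\rangle+\langle s_*'\rangle$ (a consequence of $|s|^2\le|s'|^2+|s_*'|^2$). So the integrand's weight is bounded by $\langle s'\rangle^{-lp'}+\langle s_*'\rangle^{-lp'}$.

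\emph{Step 2: Change of variables $u\mapsto v'$ or $u\mapsto u'$ at fixed $\omega$.} In the rotated frame, $u'=u+\omega\cdot(v-u)\omega$ and $v'=v-\omega\cdot(v-u)\omega$. For fixed $v,\omega$ the map $u\mapsto u'$ is linear, $u'-v=(I-\omega\otimes\omega)(u-v)$, and is degenerate. The map $u\mapsto v'$ is the rank-one projection $v'-v=-(\omega\otimes\omega)(v-u)$, also degenerate. So a direct change of variables fails. Instead I would keep the $\omega$-integration, which the proof of Proposition \ref{prop-Nonhomogeneous term} pulls outside, inside the estimate, using $\mathbf b(\cos\theta)\lesssim|\cos\theta|$. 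Then I would use the standard Carleman-type representation: for fixed $v$, the pair $(u,\omega)$ is parametrized by $(v',u')$ with $v'-v\perp u'-v$, with Jacobian $\sim |v-u|^{-2}|v'-v|^{-1}$ up to $|\cos\theta|$ factors. The $|\cos\theta|$ from the cutoff is used to cancel the singular part of this Jacobian.

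\emph{Step 3: Reduction to Lemma \ref{Integral Estimate Lemma}.} After the substitution, each term becomes $\int |v-z|^{-a}\langle x-(t+1)z\rangle^{-lp'}\,dz$ over a 3-dimensional or lower-dimensional set, with a power $a$ related to $(\frac1p-\gamma)p'$. Scaling $z-v=w/(t+1)$, exactly as in the proof of Lemma \ref{lem-loss}, produces the factor $(t+1)^{(\frac1p-\gamma)p'-3}$ and a bounded integral. On the lower-dimensional (planar) pieces, Lemma \ref{Integral Estimate Lemma} with $n=2$ or $n=1$ requires $lp'>1$, i.e.\ $l>1-\frac1p$, which is exactly one of the assumptions. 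The conditions $\frac4p-3<\gamma<\frac2p-1$ ensure $0\le a<n$ and $a+lp'>n$ in every case; the constraint $\gamma<\frac2p-1$ is what makes the singular exponent integrable on the two-dimensional plane.

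\emph{Main obstacle.} The hardest step is Step 2: making the Carleman-type change of variables rigorous and checking that the powers of $|v-u|$ and $|\cos\theta|$ balance to give exactly the exponent $-\gamma+\frac1p-\frac3{p'}$ of the claim. My first attempt would be to treat the two terms from Step 1 separately. For the $\langle s_*'\rangle^{-lp'}$ term, I would integrate first over $u'$ in the plane orthogonal to $\omega$ through $v$ and then over the line direction. For the $\langle s'\rangle^{-lp'}$ term, I would use the symmetric roles of $v'$ and $u'$.
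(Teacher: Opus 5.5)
Your Step 2 changes the statement rather than proving it. In $I_{\mathrm{gain}}$ the angle $\omega$ is a fixed parameter that enters only through $v',u'$. In Proposition \ref{prop-Nonhomogeneous term} the $\omega$-integral is pulled outside by Minkowski before H\"older in $u$ is applied. Accordingly, the paper's proof fixes $\omega$ and rescales to $I_{\mathrm{gain}}^{p'}=(t+1)^{a-3}J(s,\omega)$, with $a=(\frac1p-\gamma)p'$, $b=lp'$, $s=x-(t+1)v$ and $z=(t+1)(u-v)$. It then estimates $J$ directly in a frame with $Ae_1=\omega$: it integrates out the third coordinate and does a two-dimensional case analysis. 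Moving $d\omega$ inside and using a Carleman parametrization can at best control an $\omega$-averaged quantity, which is not $I_{\mathrm{gain}}$. At fixed $\omega$ your Step 1 majorant does not help. Indeed $\int_{\R^3}|z|^{-a}\langle s-(\omega\cdot z)\omega\rangle^{-b}\,dz=+\infty$ for $a\le 2$, since nothing decays in the two directions of $\omega^\perp$. There is also no $v'\leftrightarrow u'$ symmetry to invoke, because $v'-v$ and $u'-v$ are the rank-one and rank-two projections of $u-v$. Finally, Steps 2--3 (the Jacobian, the use of $|\cos\theta|$, the exponent count) are never carried out; you flag them yourself as the main obstacle.

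Be aware, however, that no fixed-$\omega$ argument can work when $a<2$, i.e.\ $\gamma>\frac3p-2$. This is a nonempty part of the lemma's range for every $p>1$, and it is exactly the range of Theorem \ref{Well-posedeness}. To see this, write $z_\perp:=z-(\omega\cdot z)\omega$, take $s=\rho\,\omega$ with $\rho\ge 1$, and consider the slab $P=\{z:\ |\omega\cdot z-\rho|\le 1,\ |z_\perp|\le\rho\}$. On $P$ one has $\langle x-(t+1)v'\rangle\le\sqrt2$, $\langle x-(t+1)u'\rangle\le\sqrt3\,\rho$ and $|z|\le\sqrt5\,\rho$. So the weight ratio is $\gtrsim 1$ and $J(\rho\,\omega,\omega)\gtrsim\rho^{-a}|P|\sim\rho^{2-a}\to\infty$.

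The paper's proof fails at $J_{7,1}$, where Lemma \ref{Integral Estimate Lemma} is applied with $n=1$ and $l_2=b>1$. For $l_2>n$ only the boundedness in that lemma is true, not the decay rate: the unit neighbourhood of $y_1=s_1$ alone gives $J_{7,1}\gtrsim\langle s_1\rangle^{b+1-a}$. Hence one only gets $J_7\lesssim\langle s_1\rangle^{2-a}$.

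So averaging in $\omega$ is the right instinct, but then what must be stated and proved is
$\int_{\S^2}\int_{\R^3}\mathbf b(\cos\theta)\,|v-u|^{-a}\big(w(v)w^{-1}(v')w^{-1}(u')\big)^{p'}du\,d\omega\lesssim(t+1)^{a-3}$.
The proposition would then have to be re-run using H\"older jointly in $(u,\omega)$ and the pre/post-collisional change of variables at fixed $\omega$. Your Step 1 split does close this averaged estimate:
\begin{itemize}
\item The $\langle s_*'\rangle^{-b}$ piece is harmless even at fixed $\omega$. Integrate along $\omega$ to get $|z_\perp|^{1-a}$, then apply the two-dimensional integral bound with $l_1=a-1$, $l_2=b$, using $a+b>3$.
\item For the $\langle s'\rangle^{-b}$ piece, use the substitution $(\omega\cdot z)\omega=\frac{z+|z|\sigma}{2}$, $d\sigma=4|\cos\theta|\,d\omega$, with the factor $|\cos\theta|$ absorbed by $\mathbf b\lesssim|\cos\theta|$. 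This turns the $\omega$-integral into $|z|^{-2}$ times a surface integral over the sphere with diameter $[0,z]$. The resulting $z$-integral converges since $1<a<3$ and $a+b>3$.
\end{itemize}
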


        \begin{proof}
For simplicity, denote $ a = \left(-\gamma + \dfrac{1}{p} \right)p',$ and $b = lp'$, then we see that $a\in (1,3)$ and $b>1$. Without loss of general, we assume $b<3$. We compute
        \begin{align*}
            I_{\mathrm{gain}}^{p'}&=\int_{\mathbb{R}^3} \dfrac{\langle x-(t+1)v\rangle^{lp'}du}{\moc{v-u}^{(-\gamma+\frac{1}{p})p'}\langle x-(t+1)v'\rangle^{lp'}\langle x-(t+1)u'\rangle^{lp'}}\\
                         &\xlongequal{s = x-(t+1)v}\int_{\mathbb{R}^3} \dfrac{\langle s \rangle^{lp'}du}{\moc{v-u}^{(-\gamma+\frac{1}{p})p'}\langle x-(t+1)v'\rangle^{lp'}\langle x-(t+1)u'\rangle^{lp'}}\\
                         &=\int_{\mathbb{R}^3} \dfrac{\langle s \rangle^{b}du}{\moc{u-v}^{a}\langle x-(t+1)v + (t+1)(\omega\cdot (v-u))\omega\rangle^{b}\langle x-(t+1)(u+(\omega\cdot (v-u))\omega)\rangle^{b}}\\
                         &\xlongequal{u:=u-v} \int_{\mathbb{R}^3} \dfrac{\langle s \rangle^{b}du}{\moc{u}^{a}\langle s - (t+1)(\omega\cdot u)\omega\rangle^{b}\langle s-(t+1)(u-(\omega\cdot u )\omega)\rangle^{b}}\\
                         &\xlongequal{u:=\frac{u}{t+1}} (t+1)^{a-3}\int_{\mathbb{R}^3} \dfrac{\langle s \rangle^{b}du}{\moc{u}^{a}\langle s - (\omega\cdot u)\omega\rangle^{b}\langle s-(u-(\omega\cdot u)\omega)\rangle^{b}}:=(t+1)^{a-3}J.
          \end{align*}
It remains now to show $J$ is bounded uniformly with respect to $x,t$ and $v$.
          Since $(\omega\cdot u)\omega$ and $u-(\omega\cdot u)\omega$ are orthogonal, we introduce an orthogonal matrix $A$ such that
         \begin{align}\notag\left\{
          \begin{array}{rll}
            &Ae_1 = \omega, \\
            &Ae_2 = \dfrac{s-(s\cdot \omega)\omega}{\moc{s-(s\cdot \omega)\omega}} ,\\
            &Ae_3 \bot s,
          \end{array}
          \right.\end{align}
           thus $A^T \omega = e_1$ and $\omega \cdot Au = A^T\omega \cdot u = u_1.$ By performing the change of variables $u \mapsto u^* = Au$, we express the integral $J$ in the orthonormal basis $\{\omega, Ae_2, Ae_3\}$. Write $u^*=(u_1^*,u_2^*,u_3^*)$ and decompose
 $s = s_1 \omega + s_2 Ae_2 + 0\cdot Ae_3$, Then we have
          $$ \moc{s - (\omega\cdot u)\omega}^2 = s_2^2 + (s_1-u^{*}_1)^2,\ \moc{s-(u-(\omega\cdot u)\omega)}^2 = s_1^2 + (s_2 - u_2^{*})^2 + \moc{u_3^{*}}^2. $$
          Let $y = (u_1^*, u_2^*) \in \mathbb{R}^2$. By Fubini's theorem, we have
\begin{align*}
J
&= \int_{\mathbb{R}^3} \frac{\langle s \rangle^{b}\,du^*}{|u^*|^{a}(1+ s_2^2 + (s_1-u_1^*)^2 )^{\frac{b}{2}}(1+ s_1^2 + (s_2 - u_2^*)^2 + |u_3^*|^2 )^{\frac{b}{2}}} \\
&\lesssim \int_{\mathbb{R}^2} \frac{\langle s \rangle^{b}\,dy }{(1+ s_2^2 + (s_1-y_1)^2 )^{\frac{b}{2}}(1+ s_1^2 + (s_2 - y_2)^2 )^{\frac{b}{2}}}
\int_{\mathbb{R}} \frac{du_3^*}{(|y|^2 + |u_3^*|^2)^{\frac{a}{2}}}.
\end{align*}

Since $a>1$, the one-dimensional integral is finite. Indeed, by the change of variables $u_3^* = |y|\tan\theta$, we obtain
\begin{align*}
\int_{\mathbb{R}} \frac{du_3^*}{(|y|^2 + |u_3^*|^2)^{\frac{a}{2}}}
&= 2 \int_0^{\frac{\pi}{2}} \frac{|y|\sec^2\theta}{|y|^a \sec^a\theta}\, d\theta
= \frac{2}{|y|^{a-1}} \int_0^{\frac{\pi}{2}} \cos^{a-2}\theta \, d\theta \lesssim \frac{1}{|y|^{a-1}},
\end{align*}
where the last integral is finite since $a-2 \in (-1,1)$.

Therefore,
\[
J \lesssim \int_{\mathbb{R}^2}
\frac{\langle s \rangle^{b}\,dy}
{|y|^{a-1}(1+ s_2^2 + (s_1-y_1)^2 )^{\frac{b}{2}}(1+ s_1^2 + (s_2 - y_2)^2 )^{\frac{b}{2}}}.
\]
 The calculation for the right hand side above is then divided into two steps.

          \noindent \underline{\emph{Step 1: The case $|s|\geq 1$, i.e. $s_1^2+s_2^2\geq 1$.}}
\begin{align*}
J
&= \left(\int_{|y|\leq \frac{1}{2}\max\{|s_1|,|s_2|\}} + \int_{|y|\geq \frac{1}{2}\max\{|s_1|,|s_2|\}} \right)
\frac{\langle s \rangle^{b}\,dy}{|y|^{a-1}(1+ s_2^2 + (s_1-y_1)^2 )^{\frac{b}{2}}(1+ s_1^2 + (s_2 - y_2)^2 )^{\frac{b}{2}}} \\
&=: J_6 + J_7.
\end{align*}

\noindent
\textbf{Estimate of $J_6$.}
By symmetry in $y_1,y_2$, we may assume $|s_1|\geq |s_2|$, hence $|s_1|\geq \frac{\sqrt{2}}{2}$.
If $|y|\leq \frac{|s_1|}{2}$, then $|y_1-s_1|\geq \frac{|s_1|}{2}$. Thus
\begin{align*}
J_6
&\lesssim \int_{|y|\leq \frac{|s_1|}{2}} \frac{\langle s \rangle^b\,dy}{|y|^{a-1}(1+s_1^2+s_2^2)^{\frac{b}{2}} (1+s_1^2 + (y_2 - s_2)^2)^{\frac{b}{2}} } \\
&\lesssim \int_{|y|\leq \frac{|s_1|}{2}} \frac{dy}{|y|^{a-1}\langle s_1 \rangle^b}
\lesssim \frac{1}{\langle s_1 \rangle^b} \int_0^{\frac{|s_1|}{2}} r^{2-a}\,dr \\
&\lesssim \frac{1}{|s_1|^{a-3}\langle s_1 \rangle^b}
\lesssim \frac{1}{\langle s_1 \rangle^{a+b-3}}
\lesssim 1,
\end{align*}
since $a+b>3$.

\medskip
\noindent
\textbf{Estimate of $J_7$.}
We consider three separate cases.

\medskip
\noindent
\emph{Case 1: $a\in (1,2)$.}
\begin{align*}
J_7
&\lesssim \int_{|y|\geq \frac{|s_1|}{2}}
\frac{\langle s_1 \rangle^b\,dy}{|y_1|^{a-1}(1+(s_1-y_1)^2 +s_2^2)^{\frac{b}{2}} (1+s_1^2 + (y_2 - s_2)^2)^{\frac{b}{2}}} \\
&\lesssim \left(\int_{\mathbb{R}} \frac{\langle s_1 \rangle^b\,dy_1}{|y_1|^{a-1}(1+(s_1-y_1)^2 +s_2^2)^{\frac{b}{2}}}\right)
\left(\int_{\mathbb{R}} \frac{dy_2}{(1+s_1^2 + (y_2 - s_2)^2)^{\frac{b}{2}}}\right) \\
&=: J_{7,1}\cdot J_{7,2}.
\end{align*}
By Lemma \ref{Integral Estimate Lemma}, we have
$
J_{7,1} \lesssim \frac{1}{\langle s_1 \rangle^{a-2}}.
$
Moreover, by a standard one-dimensional estimate, it follows
$
J_{7,2} \lesssim \frac{1}{\langle s_1 \rangle^{b-1}} (b>1).
$
Therefore, we get
$
J_7 \lesssim \frac{1}{\langle s_1 \rangle^{a+b-3}} \lesssim 1.
$

\medskip
\noindent
\emph{Case 2: $a=2$.}
Choose $\tilde{a}\in(1,2)$ such that $\tilde{a}+b>3$. Since $|y|\geq \frac{|s_1|}{2}\gtrsim 1$, it follows that
$
|y|^{a-1} \gtrsim |y|^{\tilde{a}-1},
$
and thus the estimate reduces to Case 1. 

\medskip
\noindent
\emph{Case 3: $a\in(2,3)$.}
Using
\[
\int_{\mathbb{R}} \frac{dy_2}{(y_1^2+y_2^2)^{\frac{a-1}{2}}}
\lesssim \frac{1}{|y_1|^{a-2}},
\]
we obtain
\begin{align*}
J_7
&\lesssim \int_{\mathbb{R}} \frac{dy_1}{|y_1|^{a-2}(1+(s_1-y_1)^2 +s_2^2)^{\frac{b}{2}}}
\lesssim \frac{1}{\langle s_1 \rangle^{a+b-3}}
\lesssim 1.
\end{align*}

\medskip
\noindent\underline{\emph{Step 2: The case $|s|\leq 1$.}}

In this case $\langle s\rangle^b \lesssim 1$. Using $|y|^{a-1}\gtrsim |y_1|^{\frac{a-1}{2}}|y_2|^{\frac{a-1}{2}}$, we get
\begin{align*}
J
&\lesssim \int_{\mathbb{R}} \frac{dy_1}{|y_1|^{\frac{a-1}{2}}(1+(s_1-y_1)^2+s_2^2)^{\frac{b}{2}}}
\int_{\mathbb{R}} \frac{dy_2}{|y_2|^{\frac{a-1}{2}}(1+(s_2-y_2)^2+s_1^2)^{\frac{b}{2}}}.
\end{align*}
By Lemma \ref{Integral Estimate Lemma},
\[
\int_{\mathbb{R}} \frac{dy_i}{|y_i|^{\frac{a-1}{2}}(1+(s_i-y_i)^2}+s_k^2)^{\frac{b}{2}}
\lesssim \frac{1}{\langle s \rangle^{\frac{a-1}{2}+b-1}},
\]
and since $\frac{a-1}{2}+b-1>0$, we conclude
$
J \lesssim 1.
$

\medskip
\noindent
Combining the above estimates, we obtain $J\lesssim 1$, and hence
\[
I_{\mathrm{gain}}^{p'} \lesssim (t+1)^{a-3}J \lesssim (t+1)^{a-3}.
\]
This completes the proof of Lemma \ref{lem-gain}.
        \end{proof}
The following lemma concerns the boundedness of a summation operator on weighted $l^1$ space $l^1\left(2^{2(j+k)/p}\right)$.
        \begin{lemma}\label{Sum operator}
Define the weighted $l^1$ space
$$l^1\left(2^{2(j+k)/p}\right):= \left\{ x=\left(x_{j,k}\right)_{j,k\geq-1}\left|\quad \sum_{j,k\geq-1}2^{\frac{2(j+k)}{p}}\moc{x_{j,k}}<\infty\right. \right\} $$
with norm
$$ \norm{x}_{l^1(2^{2(j+k)/p})} = \sum_{j,k\geq-1}2^{\frac{2(j+k)}{p}}\moc{x_{j,k}}<\infty .$$
Define the operator
\[
\begin{array}{rccl}
\CT: & l^1\left(2^{2(j+k)/p}\right) & \longrightarrow & \mathbb{R}, \\[1ex]
     & x& \longmapsto & \mathcal{T}x,
\end{array}
\]
where
\[
\CT x := \sum_{j,k\geq -1} \Bigg( \sum_{|i-j|\leq 4} + \sum_{i\geq j-3} \Bigg) 2^{\frac{2(j+k)}{p}} \, x_{i,k},
\]
for sequences $x=\left( x_{j,k} \right)_{j,k\geq-1}$.

Then $\CT$ is bounded on $l^1\left(2^{ 2(j+k)/p}\right)$, i.e., there exists a constant $C>0$ (depending only on $p$) such that
\[
|\CT(x)| \leq C \norm{x}_{l^1(2^{2(j+k)/p})}.
\]
        \end{lemma}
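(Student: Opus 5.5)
The plan is to split $\CT x$ into its two pieces and bound each by a Young-type convolution estimate on the weighted sequence. Put
\[
X_{i,k}:=2^{\frac{2(i+k)}{p}}|x_{i,k}|, \qquad \norm{x}_{l^1(2^{2(j+k)/p})}=\sum_{i,k\ge -1}X_{i,k}.
\]
Then $2^{\frac{2(j+k)}{p}}|x_{i,k}| = 2^{\frac{2(j-i)}{p}}X_{i,k}$, and it suffices to bound
\[
\sum_{j,k\ge -1}\Big(\sum_{|i-j|\le 4}+\sum_{i\ge j-3}\Big)2^{\frac{2(j-i)}{p}}X_{i,k}.
\]

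First piece ($|i-j|\le 4$). The factor $2^{\frac{2(j-i)}{p}}$ is at most $2^{8/p}\le 2^8$. For each fixed $i$ there are at most $9$ admissible indices $j$. Exchanging the order of summation (Tonelli, all terms nonnegative) therefore gives a bound of $9\cdot 2^{8/p}\sum_{i,k}X_{i,k}$.

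Second piece ($i\ge j-3$). Here $j-i\le 3$. Exchanging sums, for each fixed $(i,k)$ we need
\[
\sum_{-1\le j\le i+3}2^{\frac{2(j-i)}{p}} \le \sum_{m\le 3}2^{\frac{2m}{p}} = \frac{2^{6/p}}{1-2^{-2/p}}.
\]
This geometric series converges because $p<\infty$, and the constant depends only on $p$. Multiplying by $X_{i,k}$ and summing over $(i,k)$ gives the bound.

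Adding the two pieces yields $|\CT x|\le C(p)\norm{x}_{l^1(2^{2(j+k)/p})}$. Absolute convergence of all the series justifies the rearrangements, and the $k$ index is simply carried along throughout.

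The only point requiring care is the second sum. One must make sure the decay runs in the correct direction: $i$ is allowed to be large relative to $j$, and it is exactly this direction that makes $2^{2(j-i)/p}$ summable over $j$. One must also note that the constant blows up as $p\to\infty$, which is harmless since $p$ is fixed in $[1,\infty)$.
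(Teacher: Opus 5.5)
Your proof is correct and follows the paper's argument. The paper factors the weight as $2^{2k/p}\,2^{2(j-i)/p}\,2^{2i/p}$ and applies Young's inequality on $l^1$ in the index $j$, with the summable kernel $2^{-2m/p}\left(1_{|m|\le 4}+1_{m\ge -3}\right)$ in $m=i-j$; your Tonelli exchange followed by the geometric-series bound is that same estimate carried out by hand.
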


        \begin{proof}
           For any sequence $x\in l^1(2^{2(j+k)/p})$, by Young's inequality on $l^{1}$, it follows
                    \begin{align*}
                      \sum_{j,k\geq -1}\left( \sum_{\moc{i-j}\leq 4}+\sum_{i\geq j-3}\right)2^{\frac{2(j+k)}{p}}x_{i,k}
                      =&\sum_{k\geq -1} 2^{\frac{2k}{p}} \sum_{j\geq-1}\left(\sum_{\moc{i-j}\leq 4}+\sum_{i\geq j-3} \right)2^{\frac{2(j-i)}{p}}2^{\frac{2i}{p}}x_{i,k}\\
                      =&\sum_{k\geq -1} 2^{\frac{2k}{p}}\sum_{j\geq-1}\left(\left[2^{-\frac{2i}{p}}\left( 1_{\moc{i}\leq 4}+1_{i\geq -3}\right) \right] *\left(2^{\frac{2i}{p}}x_{i,k}\right)\right)_{j}\\
                      \leq&  \norm{2^{-\frac{2j}{p}}\left( 1_{\moc{j}\leq 4}+1_{j\geq -3}\right)}_{l^1}\sum_{k\geq-1} 2^{\frac{2k}{p}} \sum_{j\geq-1}2^{\frac{2j}{p}}\moc{x_{j,k}}\\
                      \leq& C\norm{x}_{l^1(2^{2(j+k)/p})},
                    \end{align*}
                    thus the proof of Lemma \ref{Sum operator} is finished.
        \end{proof}

In what follows, we establish the weighted Besov embedding \eqref{Embedding inequailty}.

\begin{proof}[The proof of \eqref{Embedding inequailty}]
Since $l\geq0$, direct calculation yields
        	\begin{align*}
        		\norm{w(v)f}_{L_y^{\infty}}&=\norm{\langle x-(t+1)v\rangle^{l} f(t,x,v)}_{L_y^{\infty}}\\
        		&\lesssim \norm{\langle \eta-(t+1)v_1\rangle^{l} f(t,x,v)}_{L_y^{\infty}}
        		+\norm{\langle y-(t+1)v_y\rangle^{l} f(t,x,v)}_{L_y^{\infty}}\\
        		&= \langle \eta-(t+1)v_1\rangle^{l}\norm{ f(t,x,v)}_{L_y^{\infty}}
        		+\norm{\langle y\rangle^{l} f(t,\eta,y+(t+1)v_y,v)}_{L_y^{\infty}}\\
       			&=: H_1+H_2.
        	\end{align*}
        	For $H_1$, by $B_{p,1}^{2/p}\hookrightarrow L_{y}^{\infty},$ it follows
        	\begin{align*}
        		H_1&\lesssim \langle \eta-(t+1)v_1\rangle^{l} \sum_{j\geq -1}2^{\frac{2j}{p}}\norm{\Delta_j f(t,\eta,y,v)}_{L_y^p}\\
        		&\lesssim \sum_{j\geq -1}2^{\frac{2j}{p}}\norm{ w(t,x,v)\Delta_j f(t,\eta,y,v)}_{L_y^p} = \norm{f}_{B_{p,1,w}^{2/p}}.
        	\end{align*}
        	For $H_2$, note that for fixed $(t,\eta,v)$, the following embedding holds:
        	\begin{equation}\label{embeding---}
        		\norm{\langle y \rangle^{l} g(t,x,v)}_{L^{\infty}}\lesssim\sum_{j\geq-1} 2^{\frac{2j}{p}}\norm{\langle y \rangle^{l}\Delta_j g(t,x,v)}_{L^p_y},
        	\end{equation}
        	which follows from \cite[pp.156]{et-96}. Let $g(t,\eta,y,v)=f(t,\eta,y+(t+1)v_y,v)$.
        	Recalling the definition of operator $\Delta_j$ in Section \ref{sec-nfs}, we compute
        	\begin{align*}
        		\Delta_j g(y)
        		&=2^{2j}\int_{\mathbb{R}^2} \psi(2^{-j}(y-z))g(t,\eta,z,v)dz\notag\\
        		&= 2^{2j}\int_{\mathbb{R}^2} \psi(2^{-j}(y+(t+1)v_y-z))f(t,\eta,z,v)dz\notag\\
        		&= \Delta_j f(y+(t+1)v_y).
        	\end{align*}
        	Then combining this with
        	\eqref{embeding---}, we obtain
        	\begin{align*}
        		H_2=\norm{\langle y\rangle^{l} g(t,\eta,y,v)}_{L_y^{\infty}}&\lesssim \sum_{j\geq-1} 2^{\frac{2j}{p}}\norm{\langle y \rangle^{l}\Delta_j g(t,\eta,y,v)}_{L^{p}_y}\\
        		   &= \sum_{j\geq-1} 2^{\frac{2j}{p}}\norm{\langle y \rangle^{l}\Delta_j f(t,\eta,y+(t+1)v_y,v)}_{L^{p}_y}\\
        		   &= \sum_{j\geq-1} 2^{\frac{2j}{p}}\norm{\langle y - (t+1)v_y \rangle^{l}\Delta_j f(t,\eta,y,v)}_{L^{p}_y}\\
        		   &\lesssim \sum_{j\geq-1} 2^{\frac{2j}{p}}\norm{w(v)\Delta_j f(t,x,v)}_{L^{p}_y}=\norm{f}_{B_{p,1,w}^{2/p}}.
        	\end{align*}
        This ends the proof of the weighted Besov imbedding \eqref{Embedding inequailty}.
\end{proof}

The following lemma is devoted the bound of the weighted frequency cut-off operators $\Delta_j$ and $S_{j}$.

\begin{lemma}\label{ds-bd-lem}
Let $1\leq p \leq \infty$ and $l\geq0$, it holds that
 \begin{equation}\label{bd - w(v)Delta_k}
		\norm{w(v)\Delta_j f}_{L_y^p}\lesssim \norm{w(v)f}_{L_y^p},\  \norm{w(v)S_{j}f}_{L_y^p} \lesssim \norm{w(v)f}_{L_y^p}.
	\end{equation}
	Furthermore, taking the $L_{\eta}^p$ norm yields:
	$$  \norm{w(v)\Delta_j f}_{L_x^p}\lesssim \norm{w(v)f}_{L_x^p}, \norm{w(v)S_{j}f}_{L_x^p} \lesssim \norm{w(v)f}_{L_x^p}.$$

\end{lemma}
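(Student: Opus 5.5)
The approach is to view $\Delta_j$ and $S_j$ as convolutions in $y$ alone, with kernels $2^{2j}\psi(2^j\cdot)$ and $2^{2j}\tilde\psi(2^j\cdot)$. These kernels have uniformly bounded $L^1_y$ norms, $\|\psi\|_{L^1}$ and $\|\tilde\psi\|_{L^1}$, by scaling. The only issue is the weight $w(v)=\langle x-(t+1)v\rangle^l$, which depends on $y$ through $\langle y-(t+1)v_y\rangle$. We freeze $(t,\eta,v)$ and write $a:=\eta-(t+1)v_1$ and $c:=(t+1)v_y$. Then $w(y)=\langle (a,y-c)\rangle^l$, and $\langle (a,y-c)\rangle^2 = \langle a\rangle^2+|y-c|^2$.

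The key step is a Peetre-type inequality, uniform in $a$ and $c$:
\[
\langle (a,y-c)\rangle^l \le 2^{l/2}\,\langle (a,z-c)\rangle^l\,\langle y-z\rangle^l,\qquad l\ge0 .
\]
This follows from $\langle X+Y\rangle\le \sqrt2\langle X\rangle\langle Y\rangle$ in $\mathbb{R}^3$, applied with $X=(a,z-c)$ and $Y=(0,y-z)$. Applying it gives
\[
|w\,\Delta_j f|(y)\lesssim \int_{\mathbb{R}^2}2^{2j}|\psi(2^j(y-z))|\langle y-z\rangle^l\,|w f|(z)\,dz .
\]
Young's inequality in $y$ then yields $\|w\Delta_j f\|_{L^p_y}\lesssim \|K_j\|_{L^1}\|wf\|_{L^p_y}$, where $K_j(y)=2^{2j}|\psi(2^jy)|\langle y\rangle^l$.

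The point to check is that $\|K_j\|_{L^1}$ is bounded uniformly in $j\ge0$. After scaling, $\|K_j\|_{L^1}=\int|\psi(y)|\langle 2^{-j}y\rangle^l dy\le \int|\psi(y)|\langle y\rangle^l dy$, since $2^{-j}\le1$. This is finite because $\psi$ is Schwartz. For $j=-1$ the same argument works with $\tilde\psi$, since $\langle 2y\rangle^l\le 2^l\langle y\rangle^l$. For $S_j$ the kernel is $2^{2j}\tilde\psi(2^j\cdot)$, or $0$ when $j=-1$, and the identical argument applies, again using that $\tilde\psi$ is Schwartz. I expect no real obstacle beyond this; the only thing to watch is that constants do not depend on $(t,\eta,v)$, which is guaranteed because the Peetre inequality is uniform in $a$ and $c$.

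Finally, the $L^p_x$ statement follows by raising the pointwise-in-$\eta$ bound to the $p$-th power and integrating over $\eta\in\mathbb{R}$. The constant is independent of $\eta$, so no additional argument is needed. The case $p=\infty$ is handled in the same way, taking suprema instead.
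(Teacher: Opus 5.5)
Your proposal is correct and follows the same route as the paper. You move the weight inside the convolution via the Peetre-type bound $w(y)\lesssim \langle y-z\rangle^l w(z)$, then apply Young's inequality with the kernel $2^{2j}\langle y\rangle^l|\psi(2^j y)|$, whose $L^1_y$ norm is bounded uniformly in $j$ by scaling. The paper uses the Peetre inequality (uniform in $t,\eta,v$) and the separate handling of $j=-1$ and of $S_j$ only implicitly, and you spell them out.
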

\begin{proof}
We only prove $\eqref{bd - w(v)Delta_k}_1$, as the proof of $\eqref{bd - w(v)Delta_k}_2$ is analogous.
	For $l\geq 0,$  by Young's inequality, a direct computation yields
	\begin{align*}
		\norm{w(v)\Delta_j f}_{L_{y}^p} &= \norm{w(v)\int_{\mathbb{R}^2} 2^{2j}\psi(2^j z)f(t,\eta,y-z,v)dz }_{L_{y}^p}\\
		&\lesssim \norm{\int_{\mathbb{R}^2} 2^{2j} \langle z \rangle^l\psi(2^j z)w(t,\eta,y-z,v)f(y-z)dz }_{L_y^{p}}\\
		&\lesssim\norm{2^{2j} \langle y \rangle^l \psi (2^j y)}_{L_y^1} \norm{w(v)f}_{L_y^p},
	\end{align*}
It is clear that $\norm{2^{2j} \langle y \rangle^l \psi (2^j y)}_{L_y^1}\lesssim \norm{\langle y\rangle^l \psi(y)}_{L^1_y}\lesssim1$, which implies $\eqref{bd - w(v)Delta_k}_1$.
	This ends the proof of Lemma \ref{ds-bd-lem}.
\end{proof}

The following lemma is devoted to the so-called div-curl lemma, which has been used in \cite{wz-24}.
\begin{lemma}[Div-curl lemma]\label{div-curl Lemma}
        Suppose that the functions \( f^{ij} \), for \( i,j = 1,2 \), satisfy
\[
\begin{cases}
\partial_t f^{11} + \partial_x f^{12} = G^1, \\
\partial_t f^{21} - \partial_x f^{22} = G^2,
\end{cases}
\]
and
\[
f^{11}, f^{12}, f^{21}, f^{22} \to 0 \quad \text{as } |x| \to \infty.
\]

Then it holds that
\begin{align*}
\int_0^T \int_{-\infty}^{+\infty} \big( f^{11} f^{22} + f^{12} f^{21} \big) \, dx \, dt
&\le \Bigg| \int_{x<y} f^{11}(0,x) f^{21}(0,y) \, dx \, dy \Bigg|
    \\
&\quad + \Bigg| \int_{x<y} f^{11}(T,x) f^{21}(T,y) \, dx \, dy \Bigg| \\
&\quad + \Bigg| \int_0^T \int_{-\infty}^{+\infty} \Big( \int_{-\infty}^x f^{11}(t,y) \, dy \Big) G^2(t,x) \, dx \, dt \Bigg| \\
&\quad + \Bigg| \int_0^T \int_{-\infty}^{+\infty} \Big( \int_x^{+\infty} f^{21}(t,y) \, dy \Big) G^1(t,x) \, dx \, dt \Bigg|,
\end{align*}
provided that the right-hand side is bounded.
\end{lemma}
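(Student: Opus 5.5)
The plan is the classical one-dimensional interaction functional of Glimm/Bony type. Introduce
\[
\mathcal{I}(t)=\int_{x<y} f^{11}(t,x)\,f^{21}(t,y)\,dx\,dy
=\int_{-\infty}^{+\infty}\Big(\int_{-\infty}^{y} f^{11}(t,x)\,dx\Big) f^{21}(t,y)\,dy ,
\]
and show that $\frac{d}{dt}\mathcal{I}(t)$ equals minus the integrand $\int (f^{11}f^{22}+f^{12}f^{21})\,dx$ up to source terms. Integrating over $[0,T]$ and taking absolute values then gives the claimed inequality: the two boundary terms come from $\mathcal{I}(0)$ and $\mathcal{I}(T)$, and the two remaining terms come from $G^1,G^2$.

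\textbf{Step 1: differentiate $\mathcal{I}$.} Using the equations,
\[
\frac{d}{dt}\mathcal{I}
=\int_{x<y}\big(G^1(x)-\partial_x f^{12}(x)\big)f^{21}(y)\,dx\,dy+\int_{x<y} f^{11}(x)\big(G^2(y)+\partial_y f^{22}(y)\big)\,dx\,dy .
\]

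\textbf{Step 2: integrate out the derivatives.}
\begin{itemize}
\item For $\int_{x<y}\partial_x f^{12}(x)f^{21}(y)$, integrate in $x$ over $(-\infty,y)$. This gives $\int f^{12}(y)f^{21}(y)\,dy$, using the decay of $f^{12}$ at $-\infty$.
\item For $\int_{x<y} f^{11}(x)\partial_y f^{22}(y)$, integrate in $y$ over $(x,\infty)$. This gives $-\int f^{11}(x)f^{22}(x)\,dx$, using the decay of $f^{22}$ at $+\infty$.
\end{itemize}
Hence
\[
\frac{d}{dt}\mathcal{I}=-\int\big(f^{11}f^{22}+f^{12}f^{21}\big)dx+\int\Big(\int_x^{\infty}f^{21}\Big)G^1(x)\,dx+\int\Big(\int_{-\infty}^{y}f^{11}\Big)G^2(y)\,dy .
\]

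\textbf{Step 3: integrate in time.} Integrate over $[0,T]$ and solve for the space-time integral. Bounding each resulting term by its absolute value yields exactly the four terms on the right-hand side of the lemma. Note the statement only claims the signed integral is bounded by the sum of absolute values, which is immediate.

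\textbf{Main obstacle.} The expected difficulty is justifying the Fubini manipulations and boundary terms. The hypothesis ``provided the right-hand side is bounded'' together with the decay assumptions should suffice. If needed, I would first work with smooth, compactly supported (in $x$) truncations and pass to the limit.
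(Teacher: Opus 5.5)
Your proof is correct. The paper does not prove this lemma; it states it and refers to earlier work. Your interaction functional $\mathcal{I}(t)=\int_{x<y}f^{11}(t,x)f^{21}(t,y)\,dx\,dy$ is the standard computation behind such a result.

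The signs and boundary terms check out:
\begin{itemize}
\item the $\partial_x f^{12}$ term gives $-\int f^{12}f^{21}$, using decay at $-\infty$;
\item the $\partial_y f^{22}$ term gives $-\int f^{11}f^{22}$, using decay at $+\infty$.
\end{itemize}
So you actually obtain an exact identity: the space--time integral equals $\mathcal{I}(0)-\mathcal{I}(T)$ plus the two source terms. The stated bound follows at once by taking absolute values.

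The one point worth making explicit is integrability. Your Fubini swaps need $f^{11}(t,\cdot)$, $f^{21}(t,\cdot)$ and $G^i(t,\cdot)$ to lie in $L^1(\mathbb{R})$. An example is rewriting $\int f^{21}(y)\int_{-\infty}^{y}G^1\,dx\,dy$ as $\int G^1(x)\int_x^{\infty}f^{21}\,dy\,dx$. The bare decay hypothesis does not give this. It does hold in the paper's application, where $f^{11}$ and $f^{21}$ are the nonnegative quantities $\int_{\mathbb{R}^2}|w\Delta_j f|^p\,dy$ and $\int_{\mathbb{R}^2}|w\Delta_k g|^p\,dy$.
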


	\noindent {\bf Acknowledgements:}
	SQL was supported by grants from the National Natural Science Foundation of China under contracts 12325107 and 12471217.
	YZ was supported by grant from the National Natural Science Foundation of China under contract 12571231.

\medskip
\noindent\textbf{Data Availability Statement:}
Data sharing is not applicable to this article as no datasets were generated or analysed during the current study.

\noindent\textbf{Conflict of Interest:}
The authors declare that they have no conflict of interest.

\end{document}